\newcommand{\id}{{1\hspace{-1mm}{\rm I}}}
\newcommand{\C}{\mathbb{C}}
\newcommand{\N}{\mathbb{N}}
\newcommand{\Z}{\mathbb{Z}}
\newcommand{\R}{\mathbb{R}}
\newcommand{\F}{\mathcal{F}}
\DeclareMathOperator*{\plim}{\textup{P-lim}}
\DeclareMathOperator{\E}{\mathbb{E}}
\newcommand{\supp}{\textup{supp}}
\newcommand{\card}{\textup{card}}
\newcommand{\diam}{\textup{diam}}
\DeclareMathOperator{\sign}{\text{sign}}
\newtheorem{theorem1}{Theorem}[section]
\newtheorem{theorem4}{Theorem}[section]
\newtheorem{le1}[theorem4]{Lemma}
\newtheorem{le2}[theorem4]{Lemma}
\newtheorem{theorem2}[theorem4]{Theorem}
\newtheorem{theorem3}{Theorem}[section]
\newtheorem{theorem5}[theorem4]{Theorem}
\newtheorem{theorem6}{Theorem}[section]
\newtheorem{theoremub}[theorem4]{Theorem}
\newtheorem{theo1}[theorem1]{Theorem}
\newtheorem{theo2}[theorem1]{Theorem}
\newtheorem{theo3}[theorem1]{Theorem}
\newtheorem{theo4}[theorem1]{Theorem}
\newtheorem{cor1}[theorem6]{Corollary}
\newtheorem{cor4}[theorem4]{Corollary}
\newtheorem{rem4}[theorem4]{Remark}
\newtheorem{lem1}[theorem6]{Lemma}
\newtheorem{lem2}[theorem6]{Lemma}
\newtheorem{lem3}[theorem6]{Lemma}
\newtheorem*{problem}{The Inverse Problem}%[section]
\newtheorem{remark2}[theorem3]{Remark}
\newtheorem{remark3}[theorem3]{Remark}
\newtheorem{remark4}[theorem3]{Remark}
\newcommand\reallywidehat[1]{\arraycolsep=0pt\relax%
\begin{array}{c}
\stretchto{
  \scaleto{
    \scalerel*[\widthof{\ensuremath{#1}}]{\kern-.5pt\bigwedge\kern-.5pt}
    {\rule[-\textheight/2]{1ex}{\textheight}} %WIDTH-LIMITED BIG WEDGE
  }{\textheight} % 
}{0.5ex}\\           % THIS SQUEEZES THE WEDGE TO 0.5ex HEIGHT
#1\\                 % THIS STACKS THE WEDGE ATOP THE ARGUMENT
\rule{-1ex}{0ex}
\end{array}
}
\newtheoremstyle{break}
{9pt}				%Space above, empty = 'usual value'
{9pt}				%Space below
{\itshape}	%Body font
{}					%Indent amount (empty = no indent, \parindent = para indent)
{\bfseries}	%Thm head font
{.}					%Punctuation after thm head
{\newline}	%Space after thm head: \newline = linebreak
{}					%Thm head spec
\theoremstyle{break}
\title{\bf{An Inverse Problem for Infinitely Divisible Moving Average Random Fields}}
\author{W. Karcher, S. Roth, E. Spodarev, C. Walk} 
\begin{document}
\maketitle
\vspace{-0.5cm}
\begin{center}
	\textit {Ulm University}
\end{center}	
%-------------------------------------------------------------------------------------------------------------------------------
%\footnote{$\dag$ Corresponding author}

\renewcommand{\abstractname}{Abstract}
\begin{abstract}
Given a low frequency sample of an infinitely divisible moving average random field $\{\int_{\R^d} f(x-t)\Lambda(dx); \
t \in \R^d \}$ with a known simple function $f$, we study the problem of nonparametric estimation of the L\'{e}vy characteristics of the independently scattered 
random measure $\Lambda$. We provide three methods, a simple plug-in approach, a method based on Fourier transforms and an approach involving decompositions with respect to $L^2$-orthonormal bases, which allow  to estimate the L\'{e}vy density of $\Lambda$. For  these methods,
the bounds for the $L^2$-error are given. Their numerical performance is compared in a simulation study.
\end{abstract}
%-------------------------------------------------------------------------------------------------------------------------------

\noindent
Keywords: Infinitely divisible random measure; stationary random field; L\'{e}vy process, moving average; L\'{e}vy density; Fourier transform; Banach fixed--point theorem.
%-------------------------------------------------------------------------------------------------------------------------------
\section{Introduction} \label{sect:Int}
Let $\Lambda$ be a stationary infinitely divisible independently scattered random measure with L\'{e}vy characteristics
$(a_0,b_0,v_0)$, where $a_0 \geq 0$, $b_0 \in \R$ and $v_0$ is a L\'{e}vy density. Let furthermore $X = \{ X(t); \ t \in \R^d \}$ be
a moving average infinitely divisible random field on $\R^d$ defined by 
\begin{equation}\label{eq:introduction}
X(t) = \int_{\R^d} f(x-t) \Lambda(dx), \quad t \in \R^d,
\end{equation}
with L\'{e}vy characteristics $(a_1,b_1,v_1)$, where $f = \sum_{k=1}^n f_k \id_{\Delta_k}$ is a simple function.
Suppose a sample $(X(t_1),\dots,X(t_N))$ from $X$ is available. The problem studied in this paper is the nonparametric estimation of $(a_0,b_0,v_0)$. For any simple function
$f$ with congruent sets $\Delta_k$, $X(t)$ in (\ref{eq:introduction}) has the same distribution as a linear combination of i.i.d. infinitely divisible random variables.
Therefore, existence and uniqueness of a characteristic triplet $(a_0,b_0,v_0)$ with the property that a certain
linear combination of independent random variables with the corresponding infinitely divisible distribution leading to a random
variable with L\'{e}vy characteristics $(a_1,b_1,v_1)$ becomes a characterization problem for such distributions.
For certain distributions, namely the Poisson and the Gaussian one as well as a mixture of both, all possible
distributions for the summands in the linear combination can be described (see e.g. \cite{Linnik73}).
The disadvantage of those characterization theorems is that they do not give any information about the involved parameters 
(expectation and variance of each summand) and so it is not possible to derive sufficient conditions for the existence of a solution in terms
of the kernel function $f$. Therefore, to solve the inverse problem,  we prefer to use concrete relations between the characteristic triplets of 
$X$ and $\Lambda$ (Section 3) given in terms of $f$. 

The recent preprint \cite{BelPanWoern16} covers the case $d=1$ estimating the L\'evy density $v_0$ of the integrator L\'evy process $\{ L_s\}$ of a moving average process  $X(t)=\int_{\R} f(t-s) \, d L_s,$ $t\in\R$. It is assumed that $\E\, L^2_0<\infty$. The estimate is based on the inversion of the Mellin transform of the second derivative of the cumulant of $X(0)$. A uniform error bound as well as  the consistency of the estimate are given. It is not assumed that $f$ is simple, however, main results  are subject to a number of quite restricting integrability assumptions onto $x^2 v_0(x)$ and $f$ as well as mixing properties of $\{ L_s\}$  that are tricky to check. Additionally, the logarithmic convergence rate shown there (cf. \cite[Corollary 1]{BelPanWoern16}) is too slow. 

In our approach,  we develop the ideas of \cite{Karcher} and use Banach fixed--point theorem combined with a recursive iteration procedure (Theorem \ref{th3}) to give sufficient conditions for the existence of a (unique) solution of our (generally speaking, ill--posed) inverse problem $v_1 \mapsto v_0$. We consider simple functions $f$ since
\begin{enumerate}
\item in applications, $f$ is mainly discretely sampled,
\item any  $f\in L^1(\R^d)$ can be approximated in the  $\|\cdot \|_1$--norm by a sequence of simple $f^{(m)}\in L^1(\R^d)$ (attaining a finite number of values) arbitrarily well, 
\item this allows us to use relatively simple arguments in the proofs and to avoid complex assumptions that are not easy to verify,
\item the $L^2$--convergence rate of our estimates of $v_0$ to its true value is $O(N^{-1})$, cf. Corollaries \ref{cor:ErrEst1} and \ref{cor:ErrEst2}.
\end{enumerate}
The case of arbitrary integrable $f$ is considered in our forthcoming paper \cite{GlRothSpo017}.

This paper is organized as follows: Section \ref{sect:Prelim} gives an introduction to the theory of infinitely divisible 
random measures and stochastic integrals as well as a short overview on $m$-dependent and $\phi$-mixing random
fields together with some moment inequalities (cf. Section \ref{subsect:Moment}).  
In Section \ref{sec:fas}, we describe the inverse problem in detail and give formulas for the relationship between
the characteristics $(a_0, b_0, v_0)$ and $(a_1, b_1, v_1)$. In Section \ref{sect:InvProb}, we obtain sufficient conditions for the existence and uniqueness of the solution of the direct problem, i.e. we propose conditions
under which the mapping $(a_0, b_0, v_0) \mapsto (a_1, b_1, v_1)$ is a bijection. It turns out that this holds true if either one
of the coefficients $f_1,\dots,f_n$ dominates all the others or one of them repeats often enough in some sense.  \\
Estimates for the characteristic L\'{e}vy triplet
of $X$ are given in Section \ref{sec:est_g_1} for pure jump infinitely divisible random fields. Here we use the ideas of \cite{neumann}, \cite{gugushvili} and \cite{comte} originally designed  to estimate
 the L\'{e}vy density of L\'{e}vy processes. The main result of this section
is the proof of the upper bound for the $L^2$-error of the proposed estimator without the assumption of independence of 
observations $X(t_1),\dots,X(t_N)$. The estimation error remains of the same structure as
in the L\'{e}vy process case if the random
field $X$ is assumed to be $m$-dependent or $\phi$-mixing. For the ease of reading, long proofs of the results of this section are moved to Appendix.
Section \ref{sec:es} provides three estimation approaches for the density $v_0$ of $\Lambda$. The first method is a simple plug-in
approach. The second one, the Fourier method, is based on the idea of estimating first the Fourier transform of $v_0$ followed
by another plug-in procedure. The last method uses orthonormal bases in the Hilbert space $L^2[-A,A]$, $A>0$, for a representation of the solution $v_0$ of the inverse problem. After approximating $v_0$ by cutting off its expansion, the coefficients can be estimated by solving a system of
linear equations. For all our methods, we propose upper bounds for the $L^2$-estimation error. 
In the last section, the performance of the methods is compared by numerical simulations.

%-------------------------------------------------------------------------------------------------------------------------------

\section{Preliminaries} \label{sect:Prelim}
Introduce some notation that will be used throughout this paper.

By $\mathcal{B}(\R^d)$ we denote the Borel $\sigma$-field on the d-dimensional Euclidean space $\R^d$. 
The Lebesgue measure on $\R^d$ is denoted by $\nu_d$. We briefly write $\nu_d(dx) = dx$ 
if we integrate w.r.t. $\nu_d$ on $\R^d$. The collection of all bounded Borel sets in $\R^d$ 
will be denoted by $\mathcal{E}_0(\R^d)$. 
For any measurable space $(M, \mathcal{M}, \mu)$ we denote by $L^\alpha(M)$, $1 \leq \alpha < \infty$, the space of all
$\mathcal{M}|\mathcal{B}(\R)$-mesurable functions $f:M \rightarrow \R$ with
$\int_{M} |f|^\alpha (x) \mu(dx) < \infty$. Equipped with the norm $||\cdot||_\alpha =
\left( \int_{M} |f|^\alpha (x) \mu(dx) \right)^{1/\alpha}$, $L^\alpha(M)$ becomes a Banach space
and even in the case $\alpha=2$ a Hilbert space with scalar product $\left\langle f,g \right\rangle_\alpha=
 \int_M f(x)g(x)\mu(dx)$, for any $f,g \in L^2(M)$. With  $L^\infty(M)$ (i.e. if $\alpha = \infty$)
we denote the space of all real valued bounded functions on $M$. In case $(M, \mathcal{M}, \mu) = (\R, \mathcal{B}(\R), \nu_1)$ we
denote by 
$$H^{\delta}(\R) = \{ f \in L^2(\R): \ \int_{\R} |\F f|^2 (x)(1+x^2)^{\delta} dx <\infty \}$$
the Sobolev space
of order $\delta > 0$ equipped with the Sobolev norm $||f||_{H^\delta} = ||\F f(\cdot) (1+\cdot^2)^{\delta/2}||_2$,
where $\F$ is the Fourier transform on $L^2(\R)$. For $f \in L^1(\R)$, $\F f$ is defined by
$\F f (x) = \int_\R e^{itx}f(t)dt$, $x \in \R$.
If $(M, \mathcal{M}, \mu) = (\N, 2^\N, \mu)$
or $(M, \mathcal{M}, \mu) = (\{1,\dots,n\}, 2^{\{1,\dots,n\}}, \mu)$, $n \in \N$,
with $\mu$ being the counting measure, then we write as usual $l^\alpha(M)$ instead of $L^\alpha(M)$ and
all integrals above become sums. Throughout the rest of this paper $(\Omega, \mathcal{A}, P)$ denotes
a probability space. Note that in this case $L^\alpha(\Omega)$ is the space of all random variables
with finite $\alpha$-th moment as well as $||X||_\alpha = \left( \mathbb{E}|X|^\alpha \right)^{1/\alpha}$,
if $1 \leq \alpha < \infty$ and $||X||_\alpha = \sup_{\omega \in \Omega} X(\omega)$ if $\alpha = \infty$, 
for any $X \in L^\alpha(\Omega)$. For an arbitrary set $A$ we introduce furthermore the notation 
$\card(A)$ for its cardinality. Let $\supp f=\{  x\in\R^d: f(x)\neq 0\}$ be the support set of a function $f: \R^d\to \R$. Denote by $\diam (A)=\sup\{  \| x-y \|_{\infty}: x,y\in A   \}$ the diameter of a bounded set $A\subset \R^d$.

%-------------------------------------------------------------------------------------------------------------------------------
\subsection{\textbf{ID} Random Measures and Fields} \label{idrmaf}

Recall some definitions and give a brief overview of infinitely divisible (\textbf{ID})
random measures and fields. 

Let $\Lambda = \{\Lambda(A); \ A \in \mathcal{E}_0(\R^d)\}$ be an \textbf{ID} random measure 
on some probability space $(\Omega, \mathcal{A}, P)$, i.e. a random measure such that
\begin{enumerate}
	\item for each sequence $(E_m)_{m\in \N}$ of disjoint sets in $\mathcal{E}_0(\R^d)$ it holds
	      \begin{enumerate}[(a)]
	      	\item $\Lambda(\cup_{m=1}^\infty E_m) =	\sum_{m=1}^\infty \Lambda(E_m)$ a.s.,
		      	  whenever $\cup_{m=1}^\infty E_m \in \mathcal{E}_0(\R^d)$,
	      	\item $(\Lambda(E_m))_{m\in \N}$ is a sequence of independent random variables.
	      \end{enumerate}  
	\item the random variable $\Lambda(A)$ has an \textbf{ID} distribution for any choice 
		  of $A \in \mathcal{E}_0(\R^d)$.  
\end{enumerate}

Due to the infinite divisibility of the random variable $\Lambda(A)$, its characteristic function, which will be
denoted by $\varphi_{\Lambda(A)}$, has a L\'{e}vy-Khintchin representation which will assumed to be
of the form
\begin{equation}\label{eq:1} 
\varphi_{\Lambda(A)}(t) = \exp \left\lbrace \nu_d(A) K(t) \right\rbrace, \quad A \in \mathcal{E}_0(\R^d), 
\end{equation}
with
\begin{equation}\label{eq:K} 
 K(t) = ita_0 - \frac{1}{2} t^2 b_0 + \int\limits_{\R} \left( e^{itx} - 1 - itx \id_{[-1,1]}(x) \right)v_0(x)dx, 
\end{equation}
where $a_0 \in \R$, $0 \leq b_0 < \infty$ and $v_0$ is a L\'{e}vy
density, i.e. $\int_{\R} \min \{1,x^2\}v_0(x)dx < \infty$. The triplet $(a_0,b_0,v_0)$ will be referred to as 
{\it L\'{e}vy characteristic} of $\Lambda$. It uniquely determines the distribution of the process $\Lambda$. A 
general form for the characteristic function of any \textbf{ID} random measure can be found in \cite[p. 456]{Rajput}. The particular structure of the characteristic function in (\ref{eq:1}) means that the random measure 
$\Lambda$ is stationary with control measure $\lambda: \mathcal{B}(\mathbb{\R}) \rightarrow [0,\infty)$ given by
$$ \lambda(A) = \nu_d(A) \left[  |a_0| + b_0 + \int\limits_{\R} \min \{1,x^2\} v_0(x)dx \right], \quad A \in \mathcal{E}_0(\R^d).$$

%Note that for any sequence $(A_n)_{n \in \mathbb{N}} \subset \mathcal{E}_0(\R^d)$
%with $\lambda(A_n) \rightarrow 0$ it holds $\Lambda(A_n) \rightarrow 0$ in probability. If furthermore
%$\Lambda(A^\prime_n) \rightarrow 0$ in probability for every sequence $(A^\prime_n)_{n \in \N} \subset \mathcal{E}_0(\R^d)$
%such that $A^\prime_n \subset A_n \in \mathcal{E}_0(\R^d)$, then $\lambda(A_n) \rightarrow 0$. Since $\lambda$
%is proportional to the Lebesgue measure, we have $\lambda(A_n) \rightarrow 0$ is equivalent to $\nu_d(A_n) \rightarrow 0$. 
%In particular all $\lambda$--null sets are $\nu_d$--null sets, and vice versa.  \\
Now we can define the stochastic integral w.r.t. the \textbf{ID} random measure $\Lambda$. 
\begin{enumerate}
	\item Let $f = \sum_{j=1}^n x_j \id_{A_j}$ be a real simple function on $\R^d$, where $A_j \in \mathcal{E}_0(\R^d)$
	are pairwise disjoint. Then for every $A \in \mathcal{B}(\R^d)$ we define
	$$ \int\limits_{A}f(x)\Lambda(dx) = \sum\limits_{j=1}^n x_j \Lambda(A \cap A_j). $$
	\item A measurable function $f:(\R^d,\mathcal{B}(\R^d))\rightarrow (\R, \mathcal{B}(\R))$ is said to be
	$\Lambda$-integrable, if there exists a sequence $(f^{(m)})_{m \in \mathbb{N}}$ of simple functions as in
	1. such that
	\begin{enumerate}[(a)]
		\item $f^{(m)} \rightarrow f$, $\lambda$-a.e.
		\item for every $A \in \mathcal{B}(\R^d)$, the sequence $\left( \int_{A} f^{(m)}(x)\Lambda(dx) \right)
		_{m \in \mathbb{N}} $ converges in probability as $m \rightarrow \infty$. In this case we set
		$$ \int\limits_{A} f(x) \Lambda(dx) = \plim\limits_{m\rightarrow \infty} \int\limits_{A} f^{(m)}(x)\Lambda(dx).$$
	\end{enumerate}
\end{enumerate}

A useful characterization of $\Lambda$-integrability is given in \cite[Theorem 2.7]{Rajput}.
Now let $\{f(t - \cdot); \ t\in \R^d\}$ be a family of $\Lambda$-integrable functions induced by the
Borel measurable map $f: \R^d \rightarrow \R$. Then we define
the \textbf{ID} moving average random field $X = \{X(t); \ t \in \R^d\}$ by
\begin{equation}\label{eq:marf}
	X(t) = \int\limits_{\R^d} f(t-x)\Lambda(dx), \quad t \in \R^d.
\end{equation}
A random field is called \textbf{ID} if its finite dimensional distributions are \textbf{ID}.
The random field $X$ defined in (\ref{eq:marf}) is stationary and \textbf{ID} and the characteristic 
function of $\varphi_{X(0)}$ of $X(0)$ is given by
$$ \varphi_{X(0)}(u) = \exp \left\lbrace \int\limits_{\R^d} K(uf(s)) \,ds \right\rbrace,  $$
with $K$ given in (\ref{eq:K}). It is easy to see that
\begin{equation}\label{eq:cumulant}
 \int\limits_{\R^d} K(uf(s)) \, ds = i u a_1 - \frac{1}{2}u^2 b_1 + \int\limits_{\R} (e^{iux}-1-iux\id_{[-1,1]}(x))v_1(x)\,dx 
\end{equation} 
with 
\begin{align}
 a_1 & = \int\limits_{\mathbb{R}^d}U(f(s))ds, \quad  b_1 = b_0 \int\limits_{\R^d} f^2(s)\,ds \label{feq1} \\
v_1(x) & =  \int\limits_{S}  \frac{1}{|f(s)|}v_0\left( \frac{x}{f(s)} \right) ds,\label{feq2} 
\end{align} 
where $a_1 \in \R$, $b_1 \geq 0$, $v_1$ is the L\'{e}vy density of $X(0)$, $S = \supp (f) = \{ s \in \R^d: \ f(s) \neq 0 \}$ denotes the support of $f$
and the function $U$ is defined via
\begin{equation}\label{eq:U}
U(u) = u \left( a_0 + \int\limits_{\mathbb{R}} x \left[ \id_{[-1,1]}(ux)- \id_{[-1,1]}(x) \right] v_0(x)dx \right).
\end{equation} 
The triplet $(a_1,b_1,v_1)$ is again referred to as L\'{e}vy characteristic (of $X(0)$) and determines the distribution of 
$X(0)$ uniquely. Note that due to $\Lambda$-integrability of $f$ all integrals above are finite. This immediately implies that
$f \in L^1(\R^d) \cap L^2(\R^d)$.\\
For details on the theory of infinitely divisible measures and fields with spectral representation as well as proofs for
the above stated facts we refer the interested reader to \cite{Rajput}.

%-------------------------------------------------------------------------------------------------------------------------------

\subsection{$m$-Dependent and $\phi$-Mixing Random Fields} \label{subsect:mDep}

A random field $X = \{X(t),\ t \in T\}$, $T\subseteq \R^d$ defined on $(\Omega, \mathcal{A}, P)$
is called $m$\textup{-dependent} if for some $m\in \N$ and any finite subsets $U$ and $V$ 
of $T$ the random vectors $(X(u))_{u\in U}$ and $(X(v))_{v\in V}$ are independent, whenever
\begin{align*}
||u-v||_\infty = \max\limits_{1\leq i \leq d}{|u_i-v_i|}> m,
\end{align*}
for all $u=(u_1,\dots, u_d)^\top\in U$ and $v = (v_1,\dots, v_d)^\top\in V$. Note that a random field $X$ as in (\ref{eq:marf})
is $m$-dependent, if the support $S$ of $f$ is bounded with $m \geq \diam(S)$. 

Besides, we define the notion of $\phi$-mixing random fields. The {\it mixing coefficient} $\phi$ is defined as follows.
For any $U \subset T$, let $\F_U = \sigma(X(t), t\in U)$ be the $\sigma$--field generated by random variables $X(t), t\in U$. Let furthermore $\mathcal{U}$ and $\mathcal{V}$ be two 
sub-$\sigma$-fields of $\mathcal{A}$.   Define
\begin{align} \label{phimix1}
\phi(\mathcal{U},\mathcal{V}) := \sup\bigl\{|P(V|U)-P(V)|: \, \,V \in \mathcal{V},\, U \in \mathcal{U},\, P(U)\neq 0 \bigr\}
\end{align}
and for $k,l,r \in \N$ 
\begin{align}\label{phimix2}
\phi_{k,l}(r) := \sup\bigl\{\phi(\F_{\Gamma_1},\F_{\Gamma_2}): \,\, \text{card}(\Gamma_1)\leq k,\, \text{card}(\Gamma_2)\leq l,\, d(\Gamma_1,\Gamma_2)\geq r\bigr\},
\end{align}
where $d(\Gamma_1,\Gamma_2) := \min\{||i-j||_\infty: i\in\Gamma_1, j\in \Gamma_2\}$ for $\Gamma_1, \Gamma_2 \subset T$. 
A random field $X = \{X(t), t\in T\}$ on $(\Omega, \mathcal{A}, P)$ is called \textit{$\phi$-mixing} 
or \textit{uniform mixing} if
\begin{align}\label{phimixcon}
\underset{r\rightarrow\infty}{\lim}\,\phi_{k,l}(r) = 0
\end{align}
for any $k,l \in \mathbb{N}$. Equation (\ref{phimixcon}) is called \textit{$\phi$-mixing condition}, see e.g. \cite{doukhan} for more details on mixing.

%-------------------------------------------------------------------------------------------------------------------------------
\subsection{Moment and Exponential Inequalities for Random Fields} \label{subsect:Moment}

In the literature, one can find many moment and exponential inequalities for sums of independent and identically distributed random variables, e.g., the classical\textit{ Rosenthal inequality}  \cite{rosenthal} 
or the \textit{Bernstein inequality} \cite{pollard}.

Similar inequalities hold true for random fields. For $i\in \Z^d$ define the set $V_i^1 = \{j\in\Z^d:\, j <_{\text{lex}} i\}$, where $<_{\text{lex}}$ denotes the
lexicographic order. Let $V_i^k = V_i^1\cap \{j\in\Z^d:\, ||i-j||_\infty\geq k\}$ for $k\geq 2$. For $f(X(t))\in L^1(\Omega) $ set for $k\in\N$
\begin{align*}
\E_k[f(X(t))] := \E\bigl[f(X(t))|\F_{V_t^k}\bigr].
\end{align*}
Figure \ref{FigureVt1+Vtk} shows the sets $V_t^1$ and $V_t^k$ for some $t=(t_1,t_2)\in\Z^2$. The following two results can be found in \cite[pp. 12-14]{dedecker}.

\begin{figure} [h!]
	\centering 
	\subfigure[$V_t^1 = \{j\in\Z^2: \,j <_{\text{lex}} t \}$]
	{\includegraphics[width=0.48\textwidth]{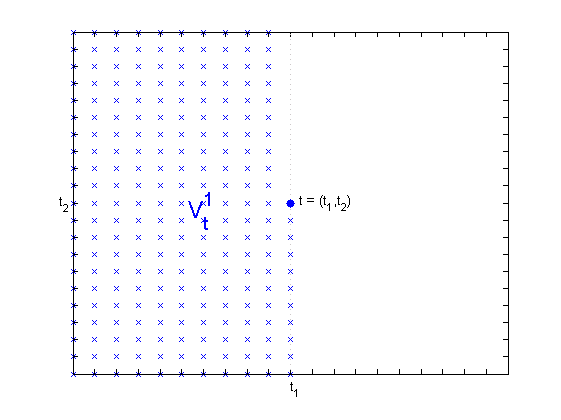} } 	
	\hfill
	\subfigure[$V_t^k = V_t^1\cap \{j\in\Z^2:\, \|t-j\|_{\infty}\geq k\}$]
	{\includegraphics[width=0.48\textwidth]{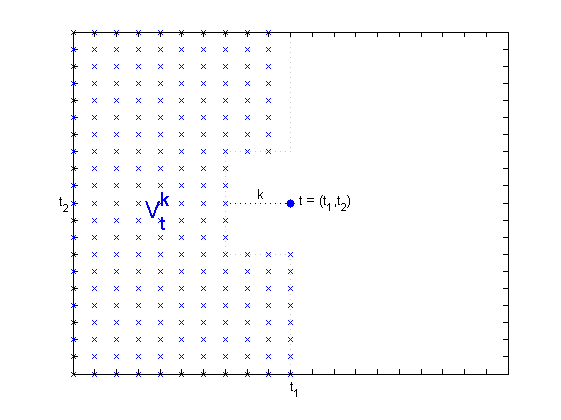} }
	\caption[The sets $V_t^1 = \{j\in\Z^2:\, j <_{\text{lex}} t \}$ and $V_t^k = V_t^1\cap \{j\in\Z^2: \|t-j\|_{\infty}\geq k\}$.]{The sets $V_t^1$ and $V_t^k$ for some $t=(t_1,t_2)\in \Z^2$ and $k\in \N$.} 
	\label{FigureVt1+Vtk}
\end{figure}

\begin{theo1} \label{Th1}
	Let $X = \{X(t), {t\in\Z^d}\}$ be a centered and square-integrable random field. Let $U \subset \Z^d$ be a finite subset. Then for any $p\geq 2$ it holds
	\begin{align*}
	\biggl(\E\biggl|\sum_{t\in U}X(t)\biggr|^p\biggl)^{1/p} \leq \biggl(2p\sum_{t\in U}b_{t,p/2}(X)\biggr)^{1/2},
	\end{align*}
	where $b_{t,\alpha}(X) = \|X(t)^2\|_{\alpha} + \sum_{k\in V_t^1}\|X(k)\E_{ \|k-t\|_{\infty}}[X(t)]\|_{\alpha}$,
	for $t\in U$ and for any $\alpha \geq 1$.
\end{theo1}
\vspace{0.5cm}

\begin{theo2} \label{Th2}
	Let $X = \{X(t), t\in\Z^d\}$ be a field of bounded and centered random variables. Set $b = \sum_{t\in U} b_{t,\infty}(X)$. Then for any positive and real $x$ it holds
	\begin{align*}
	P \Bigl(\Bigl|\sum_{t\in U}X(t)\Bigr|>x\Bigr)\leq \exp \biggl\{\frac{1}{e}-\frac{x^2}{4eb}\biggr\}.
	\end{align*}
\end{theo2}

Note that Theorem \ref{Th1} and Theorem \ref{Th2} are extensions of Burkholder's \cite{hall} and Azuma's \cite{azuma} inequality for martingales.
The next theorem \cite[p. 32]{doukhan} states a Rosenthal-type inequality for $\phi$-mixing random fields.

\begin{theo3}
	\label{Th3}
	Let $X= \{X(t), t\in\Z^d\}$ be a random field. For $p\geq 2$ let $c$ be the smallest even integer such that $c\geq p$. Assume 
	\begin{align}\label{Rosphimix}
	\sum_{r=1}^\infty (r+1)^{d(c-u+1)-1}[\phi_{u,v}(r)]^{1/c}<\infty
	\end{align}
	for all $u,v \in \N$ with $u+v \leq c$, $u,v\geq 2$. Let $U$ be a finite subset 
	of $\Z^d$. If $X(t)$ belongs to $L^p(\Omega)$ and is centered for all $t\in U$, then there exists a positive constant $C$ that depends on $p$ and on 
	the mixing coefficient $\phi_{u,v}(r)$ of $X(t)$ such that 
	\begin{align*}
	\E\biggl|\sum_{t\in U}X(t)\biggr|^p \leq C\cdot \max\biggl\{\sum_{t\in U}\E|X(t)|^p,\Bigl(\sum_{t\in U}\E|X(t)|^2\Bigr)^{p/2}\biggr\}.
	\end{align*}
\end{theo3}

Additionally, the following result can be found in \cite[p. 15]{dedecker}. \\

\begin{theo4} \label{Th4}
	Let $X = \{X(t), t\in \Z^d\}$ be a strictly stationary field of bounded and centered random variables. Take $h\geq \Vert X(0)\Vert_{\infty}$ and set 
\begin{equation}\label{eq:B}	
	B(\phi) = \sum\limits_{j\in\Z^d\backslash 0}\phi_{\infty,1}(|j|)<\infty.
\end{equation}	 
	For any $a_t \in [-1,1]$, $t \in \Z^d$ set $A(U):=\sum_{t\in U}|a_t|$ for $U\subset \Z^d$. For any positive real $x$ we have 
	\begin{align*}
	P\Bigl(\Bigl|\sum_{t\in U}a_tX(t)\Bigr|>x\Bigr)\leq \exp\biggl\{\frac{1}{e}-\frac{x^2}{4(1+B(\phi))A(U)eh^2}\biggr\}.
	\end{align*}
\end{theo4}

%-------------------------------------------------------------------------------------------------------------------------------

\section{Inverse Problem}\label{sec:fas}

In this section, we give a description of the inverse problem treated in this paper. 

Let $\Lambda = \{\Lambda(A), \ A \in \mathcal{E}_0(\R^d) \}$ be a homogeneous \textbf{ID} random measure with
L\'{e}vy characteristics $(a_0,b_0,v_0)$. Consider $f = \sum_{k=1}^n f_k \id_{\Delta_k}$ to be a simple function, where $f_k \in \R \backslash \{0\}$ and
$\Delta_k \in \mathcal{E}_0(\R^d)$ pairwise disjoint, $k=1,\dots ,n$. 
Assume furthermore $X = \{X(t), \ t \in \R^d\}$ to be an \textbf{ID} moving average random field of the form
\begin{equation}\label{eq:X} 
X(t) = \int\limits_{\R^d} f(t-x) \Lambda(dx)= \sum\limits_{k=1}^n f_k \Lambda(t-\Delta_k) , \quad t\in\R^d,
\end{equation}
where $t-A = \{t-x: \ x \in A\} \subset \R^d, \ t\in \R^d$ for an arbitrary set $A$.\\

\begin{problem}%\label{theproblem}
	Given $N \in \mathbb{N}$ observations $X(t_1),\dots,X(t_N)$ at points $t_1,\dots,t_N \in \R^d$ 
	of the random field $X$, estimate the L\'{e}vy triplet $(a_0,b_0,v_0)$ of the \textbf{ID} 
	random measure $\Lambda$.
\end{problem}
Formulas (\ref{feq1}) and (\ref{feq2}) then become 
\begin{align}\nonumber
a_1 &= \sum\limits_{k=1}^n U(f_k)\nu_d(\Delta_k), \quad b_1 = b_0 \sum\limits_{k=1}^n f_k^2 \nu_d(\Delta_k),  \\
v_1(x) &= \sum\limits_{k=1}^n \frac{\nu_d(\Delta_k)}{|f_k|}v_0 \left( \frac{x}{f_k} \right) , \quad x \in \R \backslash \{0\}, \label{eq:v_1}
\end{align}
with $U$ defined in (\ref{eq:U}).
For known $a_1,$ $b_1,$ $v_0$, the  above equations are easily solvable w.r.t. $a_0$ and $b_0$, thus providing an estimation approach for $a_0$ and $b_0$. So, given $v_1$, the main point is now to find
a solution $v_0$ of the last equation. In the next section, we give some sufficient conditions
under which a solution exists and is unique.

 %-------------------------------------------------------------------------------------------------------------------------------
\section{Existence and Uniqueness of a Solution for $v_0$} \label{sect:InvProb}

In the following, we assume w.l.o.g. that $\nu_d(\Delta_k)=1$ for all $k=1,\dots,n$. 
Typically it is common to estimate $x^n v_1(x)$ rather than $v_1(x)$ itself, since many of the estimators
for L\'{e}vy densities are based on derivatives of the Fourier transform (in the context of L\'{e}vy processes, 
see e.g. \cite{neumann,gugushvili,comte}). For this purpose let $h:\mathbb{R} \rightarrow \mathbb{R}$
be a measurable function such that  
\begin{equation}\label{eq:h_cond}
\min \{ 1,\cdot^2 \} g(\cdot) / h(\cdot)\in L^1(\R) \mbox{ for  any } g \in L^2(\R),
\end{equation}
\begin{equation}\label{supcondition}
s(y)=\sup\limits_x \{|h(x)| / |h(y x)|)\} < \infty  \mbox{ for  any }  y\neq 0.
\end{equation}
 A sufficient condition for \eqref{eq:h_cond} to hold is
\begin{equation}\label{eq:h_cond1}
\int_{\R} \frac{\min \{ 1,x^4 \}}{h^2(x)} dx < \infty.
\end{equation}
 Indeed, the Cauchy-Schwarz inequality yields
$$ \int_{\R} \min \{ 1,x^2 \} \left| \frac{g(x)}{h(x)} \right| dx \leq \left( \int_{\R} \frac{\min \{ 1,x^4 \} }{h^2(x)} dx \right)^{1/2} ||g||_2 < \infty.$$
Examples of functions $h$ satisfying \eqref{eq:h_cond}--\eqref{eq:h_cond1} are $h(x)=1$, $h(x)=|x|^\beta$, $\beta\in (1/2,5/2)$ and $h(x)=x^\beta$, $\beta=1,2$.
Consider the modified equation
\begin{equation}\label{eq:mainH}
(h v_1)(x) = \sum\limits_{k=1}^n \frac{1}{|f_k|} \frac{h(x)}{h(x/f_k)}(h v_0)(x/f_k).
\end{equation}
It is understood in $L^2(\R)$-sense, where it is assumed that $g^{(h)}_0 = hv_0$ and $g^{(h)}_1 = hv_1$ are both
in $L^2(\R)$.
Let $Q = \{1 \leq k \leq n: \ f_k = f_1\}$ be the set of all indices of coefficients $f_k$ that coincide with
$f_1$. Denote by $n_1 = \text{card}(Q)$ its cardinality. 
Define 
\begin{equation}\label{eq:S_k}
s_k=s(f_1/f_k), \quad k=1,\dots,n.
\end{equation}
The following theorem states conditions, under which equation (\ref{eq:mainH}) has
a unique solution for fixed $g^{(h)}_1 \in L^2(\R)$. 

\begin{theorem3}\label{th3}
Let a function $h: \R \rightarrow \R$ be given as above.
 Then equation (\ref{eq:mainH}) has a unique solution $g^{(h)}_0 \in L^2(\R)$ for
any $g^{(h)}_1 \in L^2(\R)$ if
\begin{equation}\label{eq:mcond}
e(f,h)=\frac{1}{n_1}\sum\limits_{k:f_k \neq f_1}  s_k \cdot \left(  |f_1|/|f_k| \right)^{1/2} < 1. 
\end{equation}
The solution is given by the formula
\begin{equation}\label{eq:solution}
\begin{split}
g^{(h)}_0(\cdot) & = \frac{|f_1|}{n_1}\frac{h(\cdot)}{h(f_1\cdot)}g^{(h)}_1(f_1 \cdot) \\ 
&+ \sum\limits_{j=1}^\infty (-1)^j \!  \!   \! \ \sum\limits_{i_1: f_{i_1}\neq f_1} \!  \!  \!  \! 
\dots  \!  \!  \!  \sum\limits_{i_j: f_{i_j}\neq f_1} \frac{\left( |f_1|/n_1 \right) ^{j+1}}{|f_{i_1}\dots f_{i_j}|} \frac{h(\cdot)}
{h \left( \frac{f_1^{j+1}}{f_{i_1}\dots f_{i_j}}\cdot\right) } 
g^{(h)}_1\left( \frac{f_1^{j+1}}{f_{i_1}\dots f_{i_j}}\cdot \right).
\end{split}	
\end{equation}
\end{theorem3}

\begin{proof}
Let $g^{(h)}_1 \in L^2(\R)$. Define the operator $\varphi_{g^{(h)}_1}:L^2(\R) \rightarrow L^2(\R)$ by
$$ \varphi_{g^{(h)}_1}(r) = \frac{|f_1|}{n_1}\frac{h(\cdot)}{h(f_1 \cdot)}g^{(h)}_1(f_1 \cdot) 
- \sum\limits_{k: f_k \neq f_1} \frac{|f_1|}{n_1|f_k|}\frac{h(\cdot)}{h\left(\frac{f_1}{f_k} \cdot\right)}r\left(\frac{f_1}{f_k}\cdot\right) $$
Then formula (\ref{eq:mainH}) yields a fixed point of $\varphi_{g^{(h)}_1}$, i.e., is a solution of equation 
\begin{equation}\label{eq:fixedpoint}
g^{(h)}_0 = \varphi_{g^{(h)}_1}(g^{(h)}_0).
\end{equation}
 It is straight forward to see that for any functions $u_1,u_2\in L^2(\R)$ it holds
$$ ||\varphi_{g^{(h)}_1}(u_1)-\varphi_{g^{(h)}_1}(u_2)||_{2} \leq e(f,h)
||u_1-u_2||_2, $$
i.e. $\varphi_{g^{(h)}_1}$ is a contraction. By Banach fixed-point theorem there exists a unique solution
$g^{(h)}_0 \in L^2(\R)$ to the equation \eqref{eq:fixedpoint} which shows the first part of the theorem.
Relation \eqref{eq:solution} can easily be obtained by iterating equation \eqref{eq:fixedpoint}  w.r.t. $g^{(h)}_0$. 
\end{proof}
\begin{remark2}
	Note that the choice of $f_1$ in this setting is arbitrary. The statement of Theorem \ref{th3} does not depend on a 
	certain order of the coefficients $f_1,\dots,f_n$. In particular, this means that $f_1$ in the definitions of $Q$ and
	$n_1$ can be replaced by any other coefficient $f_{j_0}$, $j_0 \in \{2,\dots,n\}$. Consequently, substituting 
	$f_1$ by $f_{j_0}$ in Theorem \ref{th3} leads to the same solution $g_0^{(h)}$.
	Indeed, let $f_j$, $j \neq 1$ be any other coefficient that fulfills the
	conditions of Theorem \ref{th3}, and let $\bar{g}_0^{(h)}$ be the corresponding solution of (\ref{eq:mainH}). Then 
	$$ 0 = \sum\limits_{k=1}^n \frac{1}{|f_k|} \frac{h(x)}{h(x/f_k)}
	(g_0^{(h)} - \bar{g}_0^{(h)})(x/f_k) $$
	Due to Theorem \ref{th3}, this equation has a unique solution. Since $0$
	is a solution it thus follows that $g_0^{(h)} - \bar{g}_0^{(h)} = 0$
	(in $L^2(\R)$-sense).
\end{remark2}

\vspace{0.3cm}
\begin{remark3}
	Theorem \ref{th3} gives sufficient conditions for the existence and uniqueness of a solution (\ref{eq:solution}) of equation (\ref{eq:mainH}). If condition (\ref{eq:mcond}) fails to hold, no solution as well as infinitely many solutions of (\ref{eq:mainH}) are possible. One can easily construct corresponding examples illustrating that. Consider e.g. $n=2$, $f_1 = 1$ and $f_2 = -1$. Now choose $h$ to be any odd function satisfying \eqref{eq:h_cond}-\eqref{eq:h_cond1}. Clearly condition (\ref{eq:mcond}) is not fulfilled.   Then (\ref{eq:mainH}) becomes 
	\begin{equation} \label{eq:mainHEx}
	 g_1^{(h)}(x) = g_0^{(h)}(x) + \frac{h(x)}{h(-x)}g_0^{(h)}(-x) = g_0^{(h)}(x) - g_0^{(h)}(-x).
\end{equation}	 
	\begin{enumerate}
		\item Let $g_1^{(h)} \in L^2(\R)$ be any even function, $g^{(h)}_1 \neq 0$ a.e. Then (\ref{eq:mainHEx}) has no solution since its right--hand side is odd.
		\item If, on the other hand, $g_1^{(h)}(x) = 0$ a.e. then any even $L^2$-function
		$g_0^{(h)}$ is a solution of (\ref{eq:mainHEx}).
	\end{enumerate}
	Note that condition (\ref{supcondition}) ensures that $h(\cdot)g_1^{(h)}(f_1\cdot) / h(f_1\cdot) \in L^2(\R)$ for any 
	$g_1^{(h)} \in L^2(\R)$. This condition is necessary. Consider e.g.
	 $g_1^{(h)} (x)= e^{-|x|/2}$, $h(x) = e^{|x|}$, $x \in \R$,
	as well as $f_1 = f_2 = f_3 = 1/4$, $f_4 = 1/16$. Then, except for (\ref{supcondition}), all conditions of Theorem \ref{th3} are fulfilled, but $h(\cdot)g_1^{(h)}(f_1\cdot) / h(f_1\cdot)\not\in L^2(\R)$ in this case. Thus (\ref{eq:solution}) cannot be an $L^2$-solution.
\end{remark3}

\vspace{0.3cm}
\begin{remark4}
Condition (\ref{eq:mcond}) is not necessary for the existence and uniqueness of a solution of equation (\ref{eq:mainH}).
		As a counterexample, consider  $n=3$, $f_1 = e^\alpha$, $f_2 = e^{2\alpha}$, $f_3 = e^{3\alpha}$, and $h(x) = x$.
		If 
		$$ 2\log \left( \frac{-1 + \sqrt{5}}{2} \right)  \leq \alpha \leq 2\log \left( \frac{1 + \sqrt{5}}{2} \right)$$
		then none of the coefficients fulfills (\ref{eq:mcond}). In our paper \cite{GlRothSpo017} we prove necessary and sufficient conditions for existence and uniqueness of a solution of integral equation (\ref{feq2}). It can be shown that $f = \sum_{k=1}^{3} e^{k\alpha} \id_{\Delta_k}$ satisfies those conditions and hence there is a unique solution of (\ref{eq:mainH})  for any $g_1^{(h)} \in L^2(\R)$.
\end{remark4}

Condition (\ref{eq:mcond}) means that one of the coefficients (here $f_1$) dominates all others either in its magnitude $|f_1|$ or in its frequency $n_1$. To illustrate this, consider any power function $h(x) = |x|^\beta$ with $\beta \in (1/2,5/2)$ and $|x|^\beta
v_1(x) \in L^2(\R)$. Then $s_k = 
(|f_k|/|f_1|)^\beta$, $k=1,\dots,n$ and the equation is solvable w.r.t. $|x|^\beta v_0(x)$ if
$$\frac{1}{n_1}\sum\limits_{k: f_k \neq f_1} \left( \frac{|f_k|}{|f_1|} \right)^{\beta - 1/2} < 1.  $$
In particular, if $n_1=1$ this means that $|f_1| > \max \{|f_2|,\dots,|f_n|\}$.
If $h$ is strictly positive and super-homogeneous of degree $\alpha$,  i.e. 
$$ h(cx) \geq c^\alpha h(x), \quad x\in \mathbb{R}$$
for all $c \geq 0$ and some $\alpha > 0$,  then condition (\ref{supcondition}) is fulfilled
if all the coefficients $f_k$ have the same sign. Then (\ref{eq:mcond}) holds if 
$$ \frac{1}{n_1}\sum\limits_{k: f_k \neq f_1} \left( \frac{f_k}{f_1} \right)^{\alpha - 1/2} < 1. $$

%-------------------------------------------------------------------------------------------------------------------------------
	\section{Estimation of $g^{(h)}_1$ for Pure Jump {\bf ID} Random Fields}\label{sec:est_g_1}

Modern statistical literature contains quite a number of methods to estimate the L\'{e}vy density $v_1$ of $X(0)$ if $d=1$, i.e., $X$ is a L\'{e}vy process, see \cite{Antoniadis06,comte,neumann,comte1,gugushvili,Gugushvili15,Gugushvili152},  \cite{Zuyev} and references therein. They range from moment fitting and maximum likelihood ratio to inverse Fourier methods based on the empirical characterstic function of $X(0)$. For simplicity, one often assumes that the drift and the Gaussian part of $X(0)$ vanish, thus letting $X$ be a pure jump   L\'evy process.

In the recent preprint \cite{Zuyev}, the problem of estimation of the L\'evy measure of $X(0)$  was solved for compound Poisson L\'{e}vy processes  $X$ using
variational analysis on the cone of measures and the steepest descent method of minimizing of a certain risk functional implemented for the discrete (atomic) measures. The resulting estimate of $v_1$ can be obtained out of these measures by smoothing.

For all our estimation approaches in the next section, either estimators for $g^{(h)}_1$
or at least for its Fourier transform $\F[g^{(h)}_1]$ are required to proceed with the estimation of $v_0$. Therefore we adopted an estimation procedure from \cite{comte1,neumann}
for pure jump L\'evy processes   to 
estimate $v_1$. The main difference to L\'{e}vy processes is in our case the assumption of independent
increments which obviously is not given for random fields in arbitrary dimension $d$.
Nevertheless,  assuming $X$ to be $m$-dependent or 
$\phi$-mixing allows us to use the same ideas for the estimation of
$g^{(h)}_1$. 

Consider a stationary random field $X$ as in (\ref{eq:X}) with characteristic function $\varphi_{X(0)}(u)$ given by
$$ \psi(u) := \varphi_{X(0)}(u) = \mathbb{E} e^{iuX(0)} = \exp \left\lbrace
\int\limits_{\R} \left( e^{iux}  - 1 \right)v_1(x)dx  \right\rbrace  . $$
Note that its logarithm coincides with formula (\ref{eq:cumulant}) by taking $a_1 = \int_{-1}^1 x v_1(x) dx$ and $b_1 = 0$.
Under the additional assumption $\int_{\R} |x| v_1(x) dx < \infty$ it holds
$$ \psi^\prime (u) = i \psi(u) \int\limits_{\R} e^{iux} x v_1(x)dx = i \psi(u) \F[xv_1](u), $$
that is equivalent to 
\begin{equation}\label{FourierRelation}
 \F[g_1](u) = -i \frac{\psi^\prime(u)}{\psi(u)}, 
\end{equation}
where $g_1(x) := g^{(h)}_1(x) = xv_1(x)$ (taking $h(x) = x$) and $\F[g_1]$ denotes the Fourier transform of $g_1$. 
Now let $X$ be discretely observed on 
a regular grid $\Delta \mathbb{Z}^d$ with mesh size $\Delta > 0$, i.e. we consider the random field
$Y = \{Y_j ;\, j \in \Z^d\},$ where
\begin{equation}\label{eq:rfY}
Y_j = X(\Delta j), \quad \Delta j = (\Delta j_1, \dots, \Delta j_d), \quad
j = (j_1,\dots,j_d) \in \mathbb{Z}^d.
\end{equation}
For a finite nonempty set $W \subset \mathbb{Z}^d$
with cardinality $N = |W|$ let $(Y_{j})_{j \in W}$ be a sample from $Y$. By taking the empirical
counterparts
\begin{align}
\hat{\psi}(u) & = \frac{1}{N} \sum_{j \in W} e^{i u Y_j}, \nonumber \\
\hat{\theta}(u) & = \frac{1}{N} \sum_{j \in W}Y_j e^{i u Y_j}, 
\end{align}
of $\psi(u)$ and $\theta(u):=\psi^\prime(u)$ on the right--hand side of (\ref{FourierRelation}) an estimator for $\F[g_1]$ can
be defined as
\begin{equation}\label{eq:Fg_1_est}
\widehat{\F[g_1]}(u) =  -i \frac{\hat{\theta}(u)}{\tilde{\psi}(u)},
\end{equation}
where 
\begin{equation}\label{stabilizer}
\frac{1}{\tilde{\psi}(u)} := \frac{1}{\hat{\psi}(u)}\id \{{|\hat{\psi}(u)| > N^{-1/2}}\}.
\end{equation}
The indicator function on the right hand side of (\ref{stabilizer}) ensures the stability of the estimator for small values
of $|\hat{\psi}(u)|$. Based on this idea Comte and Genon-Catalot \cite{comte1} provided the estimator 
$$\hat{g}_{1,l}(x) = \frac{1}{2 \pi} \int\limits_{-\pi l}^
{\pi l} e^{-ixu} \widehat{\F[g_1]}(u)du$$
for $g_1$. We make the following assumptions: for a $k\in \N$
\begin{description}
	\item[(H1)] $g_1\in L^1(\R)\cap L^2(\R)$
	\item[${\rm \bf (H2)}_k$] $\int_{\R} |x|^{k-1}|g_1(x)|dx < \infty$
	\item[(H3)] $\exists \ c_\psi, \ C_\psi > 0$ and $\beta \geq 0$ such that for all $x \in \R$
				$$ c_\psi (1+x^2)^{-\beta/2} \leq |\psi(x)| \leq C_\psi (1+x^2)^{-\beta/2} $$
	\item[(H4)] $g_1 \in H^\beta(\R)$ where $\beta>0$ is as in  {\bf (H3)}.		
\end{description}
Assumptions {\bf (H1)}--${\rm \bf (H2)}_k$ are moment conditions for $X(0)$.   Assumptions  {\bf (H3)}-- {\bf (H4)} are used to compute $L^2$--error bounds and 
rates of convergence of L\'evy density estimates, cf. \cite{neumann}.
For the random field $Y$ we define
\begin{align*}
\xi_t^{(1)}(u) & = Y_t\cos(uY_t)-\E \big(Y_0\cos(uY_0)\big), \\ 
\xi_t^{(2)}(u) & = Y_t\sin(uY_t)-\E \big( Y_0\sin(uY_0)\big), \\
\tilde{\xi}_t^{(1)}(u) & = \cos(uY_t)-\E\cos(uY_0), \\
\tilde{\xi}_t^{(2)}(u) & = \sin(uY_t)-\E\sin(uY_0),
\end{align*}
where $u \in \mathbb{R}$, $t\in \Z^d$. Under condition ${\rm \bf (H2)}_2$, it holds $\E X^2(0)<\infty$
and hence $\E \big(\xi_t^{(i)}(u)\big)^2<\infty$,  $\E \big(\tilde{\xi}_t^{(i)}(u)\big)^2<\infty$ for $i=1,2$, $t\in \Z^d$ and $u\in \R$.
Introduce the notation 
$\lVert \xi \rVert_{\cdot}=\left( \E  \lVert \xi \rVert_2^2 \right)^{1/2}$ for any random function $\xi:\Omega\times \R \to \C $ s.t. $\xi\in L^2(\Omega\times \R) $.

The following $L^2$-error bounds for $\hat{g}_{1,l}$ will be proven in Appendix.
\begin{theorem6}\label{TheoGenCat}
	Assume that \textbf{(H1)}, ${\rm \bf (H2)}_4$ hold and that we observe the strictly stationary 
	random field $Y = \{Y_t,\ t\in \Z^d\}$.	 Further assume that either
	\begin{itemize}
		\item[(i)] the field $Y$ is $m$-dependent or
		\item[(ii)] the random field $Y$ is $\phi$-mixing such that equations (\ref{Rosphimix})--(\ref{eq:B}) hold.
	\end{itemize} 
	Then for all $l \in \N$
	\begin{align}\label{EqErrorGenCat}
  \lVert g_1-\hat{g}_{1,l}\rVert_\cdot^2\leq \lVert g_1-g_{1,l} \rVert_2^2+\frac{K}{N}\left(\sqrt{\E|Y_0|^4} + \lVert g_1 \rVert_1^2 \right) \int_{-\pi l}^{\pi l} \frac{dx}{|\psi(x)|^2},
	\end{align}
	where $K>0 $ is a constant, $g_{1,l}$ is given by $g_{1,l}(x) = \frac{1}{2\pi}\int_{-\pi l}^{\pi l} e^{-iux}\frac{\theta(u)}{\psi(u)}du$ for $x\in\R$, and $N\in\N$ is the sample size.
\end{theorem6}
Notice that random fields \eqref{eq:X} are $m$--dependent with $m=\diam (\supp f)$ since a simple function $f$ has a compact support. Introduce the notation $L:=\|  g_1 \|_{H^\beta}^2$.

The following corollary is an immediate consequence of Theorem \ref{TheoGenCat}.
\begin{cor1}\label{cor:ErrEst1}
	If additionally \textbf{(H3)} and \textbf{(H4)} hold then the bound in Theorem \ref{TheoGenCat} can be improved to
	\begin{align*}
	\lVert g_1-\hat{g}_{1,l}\rVert_\cdot^2 \leq \lVert g_1-g_{1,l} \rVert_2^2+\frac{\tilde{K}}{N}\left(1+\sqrt{\E|Y_0|^4} \int_{-\pi l}^{\pi l} \frac{dx}{|\psi(x)|^2}\right),
	\end{align*}
	where $\tilde{K}>0$ is constant.	
\end{cor1}

\begin{cor1}\label{cor:ErrEst2}
	Under the assumptions of Corollary \ref{cor:ErrEst1} it holds
	\begin{equation}\label{eq:Bound_g}
	 \lVert g_1-\hat{g}_{1,l}\rVert_\cdot^2\leq
	 \frac{{L}}{\big(1+(\pi l)^2\big)^\beta}   +\frac{\bar{K}}{N}l \big(1+(\pi l)^2\big)^\beta,	
	 \end{equation}
	where  $\bar{K}=2\pi K c_\psi\left(  \sqrt{\E|Y_0|^4}  +  \|  g_1 \|_1^2 \right)$.	
\end{cor1}

The upper bound \eqref{eq:Bound_g} allows to choose the cut--off parameter $l>0$ optimally by minimizing the right--hand side expression in \eqref{eq:Bound_g} numerically. Choosing $N,l\to +\infty$ such that
$l^{1+2\beta}/N\to 0$ yields the $L^2$--consistency of the estimate $\hat{g}_{1,l}$.

%-------------------------------------------------------------------------------------------------------------------------------

\section{Estimation of the L\'evy Density  $v_0$}\label{sec:es}

In the following Section three different estimation approaches will be discussed. 
The plug-in and the Fourier method are both based on formula (\ref{eq:solution}),
whereas the third one, which uses orthonormal bases (OnB's) in $L^2(\R)$, is totally different
from them. For this reason, the problem will be reformulated in terms of $L^2$--OnB's
there. Nevertheless it turns out that the sufficient conditions for the existence of
a solution do not change essentially.

%-------------------------------------------------------------------------------------------------------------------------------
\subsection{Plug-In Estimator} \label{subsect:PlugIn}

Let $\hat{g}_1^{(h)}$ be an estimator for $g_1^{(h)}=h\cdot v_1$. We now consider a simple plug-in estimator
$\hat{g}_0^{(h)}$ of $g_0^{(h)}=h\cdot g_0$ defined by
\begin{equation}\label{eq:plugin}
\begin{split}
\hat{g}_0^{(h)}(x) & = \frac{|f_1|}{n_1}\frac{h(x)}{h(f_1x)}\hat{g}_1^{(h)}(f_1 x) \\ 
&+ \sum\limits_{j=1}^{n_N} (-1)^j \!\!\!\!\!  \sum\limits_{i_1: f_{i_1}\neq f_1}\!\!\!\!
\dots \!\!\!\! \sum\limits_{i_j: f_{i_j}\neq f_1} \!\!\!\! \frac{\left( |f_1|/n_1\right)^{j+1}}{|f_{i_1}\dots f_{i_j}|} \frac{h(x)}
{h \left( \frac{f_1^{j+1}}{f_{i_1}\dots f_{i_j}}x\right) } 
\hat{g}_1^{(h)}\left( \frac{f_1^{j+1}}{f_{i_1}\dots f_{i_j}}x \right),
\end{split}	
\end{equation}
where $N \in \mathbb{N}$ denotes the sample size and $n_N$ is a certain cut-off parameter 
depending on $N$. The following theorem gives a bound for the mean square error
 $||g_0^{(h)}-\hat{g}_0^{(h)}||_\cdot$.\\ 

\begin{theorem4} \label{thm:PlugIn}
	Consider $g_0^{(h)} \in L^2(\R)$ and let $\hat{g}_1^{(h)} \in L^2(\R)$ be an estimator of $g_1^{(h)}$.
	Let furthermore the conditions of Theorem \ref{th3} be fulfilled. Then 
	with the notation given there it holds
\begin{equation}\label{eq:L2ErrorPlgIn}
\begin{split}
\| g_0^{(h)}&-\hat{g}_0^{(h)} \|_\cdot   \leq \frac{|f_1|^{1/2}  }{n_1} s(f_1) \\
&\times  \left(
\left(1 + \sum\limits_{j=1}^{n_N} (e(f,h))^{j}  \right) \|g_1^{(h)}-\hat{g}_1^{(h)} \|_\cdot  
 +  \frac{  \left( e(f,h) \right)^{n_N + 1} ||g_1^{(h)}||_2 }{1-e(f,h) } \right).
\end{split}
\end{equation}
In particular, if $\hat{g}_1^{(h)}$ is an $L^2$-consistent estimator for $g_1^{(h)}$ (i.e., $\|g_1^{(h)}-\hat{g}_1^{(h)}\|_
{\cdot} \to 0 $  as $N,n_N\to \infty$) then  $\hat{g}_0^{(h)}$ is as well an $L^2$-consistent 
estimator for $g_0^{(h)}$.  
\end{theorem4}

\begin{proof}
First of all, we observe that for each $k \in \mathbb{N}$ and $f_{i_1},\dots,f_{i_k} \neq f_1$ it holds
\begin{equation*}
\begin{split}
\frac{|h(x)|}{|h(\frac{f_1^{k+1}}{f_{i_1}\cdots f_{i_k}} x )|} &=
\frac{|h(x)|}{|h(\frac{f_1}{f_{i_1}}x)|}
\frac{|h(\frac{f_1}{f_{i_1}}x)|}{|h(\frac{f_1^2}{f_{i_1} f_{i_2}}x )|} 
\cdots
\frac{|h(\frac{f_1^{k-1}}{f_{i_1}\cdots f_{i_{k-1}}}x)|}{|h(\frac{f_1^k}{f_{i_1}\cdots f_{i_k}} x )|}
\frac{|h(\frac{f_1^{k}}{f_{i_1}\cdots f_{i_k}}x)|}{|h(\frac{f_1^{k+1}}{f_{i_1}\cdots f_{i_k}} x )|} \\
& \leq s_{i_1} s_{i_2} \cdots s_{i_k} s(f_1).
 \end{split}	
\end{equation*}
By relation \eqref{eq:mainH} and condition \eqref{supcondition}, $g_1^{(h)} \in L^2(\R)$ as well, cf. Lemma \ref{lemm:g_int}.
Using formula (\ref{eq:solution}) it follows by triangle inequality and a simple integral 
substitution that
\begin{equation*}
\begin{split}
&  \|g_0^{(h)}-\hat{g}_0^{(h)} \|_\cdot  \leq \frac{|f_1|^{1/2}}
{n_1} s(f_1) \|g_1^{(h)}-\hat{g}_1^{(h)}\|_\cdot \\
& + \sum\limits_{k=1}^{n_N}\sum\limits_{i_1: f_{i_1}\neq f_1} \dots \sum\limits_{i_k: f_{i_k}\neq f_1} \frac{1}{n_1^{k+1}}
\left( \frac{|f_1|^{k+1}}{|f_{i_1}\cdots f_{i_k}|}\right)^{1/2} s\left(\frac{f_1^{k+1}}{f_{i_1}\cdots f_{i_k}}\right)
\|g_1^{(h)}-\hat{g}_1^{(h)}\|_\cdot \\
& + \sum\limits_{k=n_N+1}^{\infty}\sum\limits_{i_1: f_{i_1}\neq f_1} \dots \sum\limits_{i_k: f_{i_k}\neq f_1} \frac{1}{n_1^{k+1}}
\left( \frac{|f_1|^{k+1}}{|f_{i_1}\cdots f_{i_k}|}\right)^{1/2} s\left(\frac{f_1^{k+1}}
{f_{i_1}\cdots f_{i_k}}\right)
||g_1^{(h)}||_2  \\
&\leq \frac{|f_1|^{1/2}}
{n_1} s(f_1) \|g_1^{(h)}-\hat{g}_1^{(h)}\|_\cdot \\
& + \sum\limits_{k=1}^{n_N}\sum\limits_{i_1: f_{i_1}\neq f_1} \dots \sum\limits_{i_k: f_{i_k}\neq f_1} \frac{1}{n_1^{k+1}}
\left( \frac{|f_1|^{k+1}}{|f_{i_1}\cdots f_{i_k}|}\right)^{1/2} s_{i_1} s_{i_2} \cdots s_{i_k} s(f_1)
\|g_1^{(h)}-\hat{g}_1^{(h)}\|_\cdot \\
& + \sum\limits_{k=n_N+1}^{\infty}\sum\limits_{i_1: f_{i_1}\neq f_1} \dots \sum\limits_{i_k: f_{i_k}\neq f_1} \frac{1}{n_1^{k+1}}
\left( \frac{|f_1|^{k+1}}{|f_{i_1}\cdots f_{i_k}|}\right)^{1/2} s_{i_1} s_{i_2} \cdots s_{i_k} s(f_1)||g_1^{(h)}||_2  \\
& = \frac{|f_1|^{1/2}}
{n_1} s(f_1)\left(
\left(1 + \sum\limits_{j=1}^{n_N}  \left( \frac{1}{n_1}\sum\limits_{k: f_k \neq f_1} \left( \frac{|f_1|}{|f_k|} \right)^{1/2} s_k
\right)^{j} \right) \| g_1^{(h)}-\hat{g}_1^{(h)} \|_\cdot \right.\\
& \left. +  \left( 1-\frac{1}{n_1}\sum_{k: f_k \neq f_1} \left( \frac{|f_1|}{|f_k|} 
\right)^{1/2} s_k \right)^{-1} 
 \left( \frac{1}{n_1}\sum\limits_{k: f_k \neq f_1} \left( \frac{|f_1|}{|f_k|} \right)^{1/2} s_k
\right)^{n_N + 1} \|g_1^{(h)}\|_2\right).
\end{split}	
\end{equation*}
Since $\frac{1}{n_1}\sum_{k: f_k \neq f_1} \left( \frac{|f_1|}{|f_k|} \right)^{1/2} s_k < 1$ the consistency 
result follows immediately from this approximation.
\end{proof}

\begin{le2}\label{lemm:g_int}
	 Let $g_0^{(h)}\in L^p(\R)$, $p\ge 1$, and condition \eqref{supcondition} hold. Then $g_1^{(h)}\in L^p(\R)$.
\end{le2}
\begin{proof}
Using relation \eqref{eq:mainH}, condition \eqref{supcondition} and triangle inequality, we get 
$$
\| g_1^{(h)} \|_p \le \sum\limits_{k=1}^n s(1/f_k) \| g_0^{(h) }\|_p .
$$
\end{proof}

Using the estimator $\hat{g}^{(h)}_0$ in practice reveals that 
\begin{enumerate}
\item the choice $n_N=1,2,3$ suffcies completely to get good results due to fast convergence of the geometric series in \eqref{eq:L2ErrorPlgIn},
\item $\hat{g}^{(h)}_0$ oscillates much in a neighborhood of the origin.
\end{enumerate}
Hence, one has to regularize it applying a usual smoothing procedure. Convolve $\hat{g}^{(h)}_0$ with a smoothing kernel $K_b: \R\to\R_+$ which depends on its bandwidth $b>0$ and satisfies the following assumptions:

\begin{description}
	\item[(K1)] $K_b\in L^1(\R)\cap L^2(\R)$, $\int_\R K_b(x)\, dx=1$ for all $b>0$
	\item[(K2)] $\sup_x |\F [K_b](x) |   \le  C_K$ where $C_K\in (0,+\infty)$ is a constant independent of $b>0$
	\item[(K3)] $|1-\F [K_b](x)|  \le c_1 \min\{1, b |x|\}  $ for all $b>0$, $x \in \R$ where $c_1>0$ is a constant.	
\end{description}

For the resulting estimator
\begin{equation}\label{eq:SmoothPlugIn}
\tilde{g}^{(h)}_0(x)= \hat{g}^{(h)}_0 * K_b (x)=\int\limits_\R K_b(x-y)  \hat{g}^{(h)}_0(y) \, dy
\end{equation}
we give an upper bound of its mean square error and prove its consistency as $N, n_N\to \infty$ and $b\to +0$.

\begin{theorem4}\label{thm:ErrorSmoothedPI}
		Let $g^{(h)}_0 \in L^1(\R) \cap H^{\delta}(\R)$ for some $\delta > 1/2$, and let  $\hat{g}^{(h)}_1 \in L^1(\R) \cap L^2(\R)$ be an estimator of 
		${g}^{(h)}_1$. For a kernel $K_b$ satisfying assumptions {\rm { \bf (K1)} --{\bf  (K3)}}, $b\in (0,1)$ it holds
\begin{equation} \label{eq:ErrSmoothedPI}
\|g_0^{(h)}-\tilde{g}^{(h)}_0||_\cdot \leq \frac{C_K}{2\pi}  \|g_0^{(h)}-\hat{g}_0^{(h)}\|_\cdot 
+   ||g_0^{(h)}||_{1}^{1/2}  ||g_0^{(h)}||_{H^\delta}^{1/2} a_\delta(b), 
\end{equation}
where 
\begin{equation}\label{eq:a_delta}
	a_\delta(b) = \begin{cases}
	 \mathcal{O}\left( b^{1 \wedge \frac{(2\delta-1)}{4}} \right), & \ \delta \neq 5/2, \\
	 \mathcal{O}\left( b (- \log b)^{1/4} \right), & \ \delta = 5/2.
	\end{cases}
	\end{equation}
\end{theorem4}

\begin{proof}
By triangle inequality, Plancherel identity and convolution property of $\F$ we have
\begin{equation*}
\begin{split}
\|g_0^{(h)}-\tilde{g}^{(h)}_0||_\cdot & \leq  \|g_0^{(h)}*K_b-\hat{g}_0^{(h)}*K_b\|_\cdot  + \|g_0^{(h)}*K_b-{g}_0^{(h)}\|_\cdot \\
& \leq  \frac{1}{2\pi}\| \F[g_0^{(h)}-\hat{g}_0^{(h)}]\F[K_b]\|_\cdot  + \frac{1}{2\pi} \| \F[g_0^{(h)} ] (\F[ K_b]-1 ) \|_2 \\
& \leq \frac{C_K}{2\pi}  \|g_0^{(h)}-\hat{g}_0^{(h)}\|_\cdot 
+   \frac{1}{2\pi}  \|\F[ g_0^{(h)} ]  (  \F[K_b]-1) \|_2,
\end{split}	
\end{equation*}
since $\hat{g}^{(h)}_0 \in L^1(\R) \cap L^{2}(\R)$ by relation \eqref{eq:plugin}. 
By assumption {\bf (K3)} and Cauchy-Schwartz inequality, we have
	\begin{align*}
	||\F[g_0^{(h)}]&(1 - \F[K_b])||_{2} 
	 = \left( \int_{\R} |\F [g_0^{(h)}](x)|^2 |1-\F[K_b] (x)|^2 dx \right)^{1/2} \\
	& {\leq} c_1 ||g_0^{(h)}||_{1}^{1/2} \left( \int_{\R} |\F [g_0^{(h)}](x)|(1+x^2)^{\delta/2} (1+x^2)^{-\delta/2} \min\{ 1,b|x| \}^2 dx \right)^{1/2} \\
	& {\leq}c_1  ||g_0^{(h)}||_{1}^{1/2} ||g_0^{(h)}||_{H^\delta}^{1/2} \left( \int_{\R} \min\{ 1,b|x| \}^4 (1+x^2)^{-\delta}   dx \right)^{1/4}.
	\end{align*}
	The rest of the proof follows by observing that for $b\in(0,1)$
	\begin{align*}
	a_\delta(b)&:= \frac{c_1}{2\pi} \int_{\R} \min\{ 1,b|x| \}^4 (1+x^2)^{-\delta} dx \\
	&\le \begin{cases}
	\frac{c_1}{\pi} b^4 \left(  \int\limits_0^1 \frac{x^4 \, dx}{(1+x^2)^\delta}+\frac{1}{5-2\delta}+ \frac{b^{2\delta-5}}{2\delta-5} \right) + \frac{c_1}{\pi(2\delta-1)}b^{2\delta-1}=
	\mathcal{O}\left( b^{4 \wedge (2\delta-1)} \right), & \ \delta \neq 5/2, \\
	\frac{c_1}{\pi} b^4 \left(   \int\limits_0^1 \frac{x^4 \, dx}{(1+x^2)^\delta}+\frac{1}{4}\right) -\frac{c_1}{\pi}b^4 \log b   =   \mathcal{O}\left( -b^4 \log b \right), &\ \delta = 5/2, 
	\end{cases}\end{align*}
	as $h \rightarrow 0$.		
\end{proof}
There are many examples of kernels satisfying assumptions {\bf (K1)}--{\bf (K3)}, e.g., the Gaussian kernel $K_b(x)=\frac{1}{\sqrt{2\pi} b} e^{-x^2/(2b^2)}$. Since $\F[K_b](x)=e^{-b^2 x^2/2}$, {\bf (K1)}--{\bf (K2)} are trivial. Condition {\bf (K3)} holds from the  inequality $$ |\F[ K_b]-1|=| e^{-b^2x^2/2} - 1 |\le b^2x^2/2\le 2\min\{  1, b|x| \},\quad x\in\R,\; b>0.$$ 

Another class of examples is provided by $K_b(x) = K(x/b)/b$, $x \in \R$, where $K \in L^1(\R) \cap L^2(\R)$  is a nonnegative function such that $\int_\R K(x)\, dx=1$, $\supp (\F[K]) \subseteq [-1,1]$ and $\F [K]$ is a Lipschitz continuous function. While {\bf (K1)}--{\bf (K2)} trivially hold in this case, {\bf (K3)}  can be seen from the following lemma.
	
\vspace{0.5cm}
\begin{le1}\label{bw:lem1}
	Let $K:\R \rightarrow \R_+$ be as above. Then 
	$$ |1-\F[K_b](x)| \leq c_1 \min \{ 1,b |x| \}, \quad x \in \R, $$
	where $c_1 = \max \{ 1, L_K \}$ with $L_K > 0$ being the Lipschitz constant of $K$.
\end{le1}
\begin{proof}
	Because of $\supp (\F[K]) \subseteq [-1,1]$ and the Lipschitz continuity of $\F[K]$ it follows
	\begin{equation*}
	|1-\F[K_b](x)| = |1-\F[K](bx)| = \begin{cases}
	1 ,& \ b |x| > 1, \\
	\leq L_K b |x|, & \ b |x| \leq 1.
	\end{cases}
	\end{equation*}
	Thus $|1-\F[K_b](x)| \leq c_1 \min \{ 1,b|x| \}$.
\end{proof}
\begin{cor4}\label{cor:consist_smoothed}
Choose $\hat{g}^{(h)}_1=\hat{g}_{1,l}$, $h(x)=x$ as in Section \ref{sec:est_g_1}.  Under the assumptions of Theorem \ref{th3}, Corollary \ref{cor:ErrEst1} and Theorem \ref{thm:ErrorSmoothedPI}  the estimator
$\tilde{g}_0^{(h)}$ is  $L^2$-consistent 
 for $g_0$ as $N, n_N\to \infty$ and $b\to +0$. 
\end{cor4}
\begin{proof}
Applying Theorem \ref{thm:PlugIn} and Corollary \ref{cor:ErrEst2} yields
$\|g_0-\hat{g}^{(h)}_0||_\cdot\to 0$ as $N, l\to \infty$ for any sequence $n_N\to \infty$. Relation $a_{\delta}(b)\to 0 $ as $b\to +0$ finishes the proof.	
\end{proof}
\begin{rem4}\label{rem:choice_bandwidth}
The choice of bandwidth $b>0$ in \eqref{eq:SmoothPlugIn} can be made by solving the following minimization problem numerically:
$$
\left\| \frac{\partial \tilde{g}^{(h)}_0}{\partial b}  \right\|_2 \to \min_{b>0},
$$
which means that we are seeking for a sufficiently smooth estimate $\tilde{g}^{(h)}_0$. Assuming that $K_b$ is a $C^1$--smooth function of parameter $b>0$ and that the differentiation with respect to $b$ and the integral can be interchanged we get by Plancherel identity and convolution property of $\F$ that
$$
\left\| \frac{\partial \tilde{g}^{(h)}_0}{\partial b}  \right\|_2=
\left\| \F \left[   \frac{\partial \tilde{g}^{(h)}_0}{\partial b} \right]    \right\|_2=
\left\| \F \left[    \hat{g}^{(h)}_0 \right]   \F \left[   \frac{\partial K_b}{\partial b} \right]   \right\|_2
 \to \min_{b>0}.
$$
For easy particular functions $K_b$ the Fourier transform of $\frac{\partial K_b}{\partial b} $ can be usually calculated explicitly. In contrast, $\F \left[    \hat{g}^{(h)}_0 \right] $ has to be estimated from the data, compare Section \ref{subsect:Fourier} for $h(x)=x$. There, we use the estimate $\widehat{\F [g_0]}$ to assess $\F \left[    \hat{g}_0 \right] $.
\end{rem4}

%-------------------------------------------------------------------------------------------------------------------------------

\subsection{Fourier Approach} \label{subsect:Fourier}

A common strategy in the estimation of $g_1^{(h)}$ (e.g. in the case of L\'{e}vy processes) is first
to estimate its Fourier transform $\mathcal{F}[g_1^{(h)}] $ and then to invert it. This causes an error in the estimation of the Fourier transform and additionally
in the inversion procedure. Using plug-in estimators  of Section \ref{subsect:PlugIn}, this may increase the estimation error for $g_0^{(h)}$. For this reason, here we estimate $\mathcal{F}[g_1^{(h)}] $ directly to recover $g_0^{(h)}$. 

From now on, set  $h(x) = x^\beta$ for some $\beta \in \mathbb{N}$. In other words, equation (\ref{eq:solution}) is of the form 
\begin{equation}\label{eq:solutionF}
\begin{split}
& g_0(\cdot) =  \frac{1}{n_1}\sign(f_1)^\beta |f_1|^{1-\beta} g_1(f_1 \cdot) \\
& + \sum\limits_{j=1}^\infty (-1)^j \!\!\!\! \!\! \sum\limits_{i_1: f_{i_1}\neq f_1}\!\!\!\!
\dots \!\!\!\! \sum\limits_{i_j: f_{i_j}\neq f_1} \!\! \frac{1}{n_1^{j+1}} \sign \! \left( \frac{f_1^{j+1}}{f_{i_1}\dots f_{i_j}} \right)^\beta  \!\!
\left( \frac{|f_1|^{j+1}}{|f_{i_1}\dots f_{i_j}|} \right)^{1-\beta} \!\!\!\!\!\!\!
g_1\left( \frac{f_1^{j+1}}{f_{i_1}\dots f_{i_j}}\cdot \right)\!\!,
\end{split}
\end{equation} 
where $g_0(x) = x^\beta v_0(x)$ and $g_1(x) = x^\beta v_1(x)$. Suppose that $g_0\in L^1(\R)$ and the conditions
of Theorem \ref{th3} are fulfilled. Then $g_1\in L^1(\R)$ as well by Lemma \ref{lemm:g_int}.

The following construction of  $\hat{g}_{0,l}(t)$ and $\hat{g}_{1,l}(t)$ is motivated by estimation
approaches for the characteristic triplet of L\'{e}vy processes (see e.g. \cite{comte}).
Taking Fourier transforms on both sides of (\ref{eq:solutionF}) yields
\begin{equation*}
\begin{split}
& \F[g_0](t) = \frac{1}{n_1}\sign(f_1)^\beta |f_1|^{-\beta} \F\left[ g_1 \right] \left( \frac{t}{f_1} \right)  \\ 
&+ \sum\limits_{j=1}^\infty (-1)^j \sum\limits_{i_1: f_{i_1}\neq f_1} 
\dots \sum\limits_{i_j: f_{i_j}\neq f_1} \frac{1}{n_1^{j+1}} \sign \left( \frac{f_1^{j+1}}{f_{i_1}\dots f_{i_j}} \right)^\beta 
\left( \frac{|f_1|^{j+1}}{|f_{i_1}\dots f_{i_j}|} \right)^{-\beta}
\F\left[ g_1 \right] \left( \frac{f_{i_1}\dots f_{i_j}}{f_1^{j+1}} t \right)
\end{split}	
\end{equation*} 
for $t \in \R$. Let $\widehat{\F [g_1]}$ be any estimator for the Fourier transform of $g_1$. Then we define
the estimator $\widehat{\F [g_0]}$ for $\F [g_0]$ via
\begin{equation*}
\begin{split}
& \widehat{\F [g_0]}(t) = \frac{1}{n_1}\sign(f_1)^\beta |f_1|^{-\beta} \widehat{\F [g_1]} \left( \frac{t}{f_1} \right)  \\ 
&+ \sum\limits_{j=1}^{n_N} (-1)^j \sum\limits_{i_1: f_{i_1}\neq f_1}
\dots \sum\limits_{i_j: f_{i_j}\neq f_1} \frac{1}{n_1^{j+1}} \sign \left( \frac{f_1^{j+1}}{f_{i_1}\dots f_{i_j}} \right)^\beta 
\left( \frac{|f_1|^{j+1}}{|f_{i_1}\dots f_{i_j}|} \right)^{-\beta}
\widehat{\F [g_1]} \left( \frac{f_{i_1}\dots f_{i_j}}{f_1^{j+1}} t \right),
\end{split}	
\end{equation*}
$t \in \R$. If $\widehat{\F[g_1]}$ is locally square integrable, an estimator $\hat{g}_{0,l}$ of $g_0$ 
is constructed for some $l > 0$ as
\begin{equation}\label{est:Fourier}
 \hat{g}_{0,l}(t) = \frac{1}{2 \pi} \int\limits_{-\pi l}^{\pi l} e^{-itu} \widehat{\F [g_0]}(u)du,
 \quad t \in \R.
\end{equation}
The last expression can be rewritten as
\begin{equation*}
\begin{split}
 & \hat{g}_{0,l}(t)  = \frac{1}{n_1}\sign(f_1)^\beta |f_1|^{-\beta} \frac{1}{2 \pi} \int\limits_{-\pi l}^{\pi l} e^{-itu} 
  \widehat{\F [g_1]} \left( \frac{u}{f_1} \right)du \\
 & + \sum\limits_{j=1}^{n_N} (-1)^j \sum\limits_{i_1: f_{i_1}\neq f_1}
\dots \sum\limits_{i_j: f_{i_j}\neq f_1} \frac{1}{n_1^{j+1}} \sign \left( \frac{f_1^{j+1}}{f_{i_1}\dots f_{i_j}} \right)^\beta 
\left( \frac{|f_1|^{j+1}}{|f_{i_1}\dots f_{i_j}|} \right)^{-\beta} \\
& \times \frac{1}{2 \pi} \int\limits_{-\pi l}^{\pi l} 
e^{-itu} \widehat{\F [g_1]} \left( \frac{f_{i_1}\dots f_{i_j}}{f_1^{j+1}} u \right) du \\
&= \frac{1}{n_1}\sign(f_1)^\beta |f_1|^{1-\beta} \hat{g}_{1,\frac{l}{|f_1|}}(f_1 t)  \\
& + \sum\limits_{j=1}^{n_N} \frac{(-1)^j}{n_1^{j+1}} \sum\limits_{i_1: f_{i_1}\neq f_1}
\dots \sum\limits_{i_j: f_{i_j}\neq f_1} \sign \left( \frac{f_1^{j+1}}{f_{i_1}\dots f_{i_j}} \right)^\beta 
\left( \frac{|f_1|^{j+1}}{|f_{i_1}\dots f_{i_j}|} \right)^{1-\beta} \\
&\times
\hat{g}_{1, \left| \frac{f_{i_1}\dots f_{i_j}}{f_1^{j+1}} \right|  }
\left( \frac{f_1^{j+1}}{f_{i_1}\dots f_{i_j}} t \right)
\end{split} 
\end{equation*}
with $\hat{g}_{1,l}(t) = \frac{1}{2 \pi} \int_{-\pi l}^{\pi l} e^{-itu} \widehat{\F [g_1]}(u)du$ being
an estimator of $g_1$. 
\begin{rem4}\label{rem:new}
The estimator (\ref{eq:Fg_1_est}) from Section \ref{sec:est_g_1} is locally square integrable.
	In this case an appropriate choice for the parameter $l > 0$ can be achieved
	e.g. by minimizing the right-hand side of (\ref{eq:Bound_g}) for any fixed sample size $N$ (see also the
	discussion following Corollary \ref{cor:ErrEst2}).
%Condition {\bf (H4)} with $\beta\ge 1/2$ implies $\F[g_1]\in L^1(\R)$.} 
\end{rem4}

Similar as in Theorem \ref{thm:PlugIn} one can obtain an upper bound for the $L^2$-error. With the notation 
$g_{1,l}(t) = \frac{1}{2 \pi} \int_{-\pi l}^{\pi l} e^{-itu} \F [g_1](u)du$ we get
\begin{equation}\label{eq:estFourier}
\begin{split}
&  \Vert \hat{g}_{0,l} - g_0 \Vert_\cdot  \leq 
\frac{1}{n_1|f_1|^\beta} \left(  \left\Vert \hat{g}_{1,\frac{l}{|f_1|}} - g_1\right
\Vert_\cdot +  \frac{\left( \frac{1}{n_1}\sum\limits_{k: f_k\neq f_1} s_k \right)^{n_N + 1} }{1 - \frac{1}{n_1}\sum\limits_{k: f_k\neq f_1} s_k} \Vert g_1 \Vert_2 \right.\\
& \left.+ \sum\limits_{j=1}^{n_N} \sum\limits_{i_1: f_{i_1}\neq f_1}
\dots \sum\limits_{i_j: f_{i_j}\neq f_1} \frac{s_{i_1} \ldots s_{i_j}   }{n_1^{j}} \left\Vert \hat{g}_{1, \left| \frac{f_{i_1}\dots f_{i_j}}{f_1^{j+1}} \right| l} - 
g_1 
\right\Vert_\cdot \right) ,
\end{split}
\end{equation}
where $s_k=\left(  |f_k|/|f_1| \right)^\beta$, $k=2,\ldots,n$. Assume
\begin{equation}\label{eq:cond_s_k}
e(f, |\cdot|^{\beta+1/2})=\frac{1}{n_1}\sum\limits_{k: f_k\neq f_1} s_k < 1.
\end{equation}
Choose the estimator $\hat{g}_{1,l}$ of $g_{1}$ in an $L^2$--consistent way. Then, as
$N,l, n_N \to \infty$  in an appropriate manner, the above upper bound \eqref{eq:estFourier} tends to zero,
and $\hat{g}_{0,l}$ is $L^2$--consistent for $g_0$. For instance, one can choose $\hat{g}_{1,l}$ from Section \ref{sec:est_g_1}, which is $L^2$--consistent under assumptions of Corollary \ref{cor:ErrEst1}.

Assume, in addition to \eqref{eq:cond_s_k}, that $|f_1|>\max_{k: f_k\neq f_1} |f_k| .$ By \eqref{eq:Bound_g}, the upper bound of $\| g_1- \hat{g}_{1,l}\|_\cdot$ is monotonously non--decreasing in $l$. Since 
$$
\left|  \frac{f_{i_1}\ldots f_{i_j}}{f_{1}^{j+1}}  \right| l< \frac{l}{|f_1|} 
$$
we get by \eqref{eq:estFourier} and \eqref{eq:Bound_g} that
\begin{equation*}\label{eq:estFourier1}
\begin{split}
 \Vert \hat{g}_{0,l} - g_0 \Vert_\cdot  & \leq 
\frac{1}{n_1|f_1|^\beta} \left(  \left( 1+  \sum_{j=1}^{n_N} e^j(f, |\cdot|^{\beta+1/2})  \right) {\cal O} \left(  \frac{1}{l^{2\beta}}+\frac{l^{2\beta+1}}{N} \right) \right.\\
 &\left. +  \frac{\left( e(f, |\cdot|^{\beta+1/2}) \right)^{n_N + 1} }{1 - e(f, |\cdot|^{\beta+1/2})} \Vert g_1 \Vert_2 \right)\\
&\le \frac{1}{n_1|f_1|^\beta} \left(  \left( 1+  \frac{e(f, |\cdot|^{\beta+1/2}) }{1 - e(f, |\cdot|^{\beta+1/2})}    \right)  {\cal O} \left(  \frac{1}{l^{2\beta}}+\frac{l^{2\beta+1}}{N} \right) \right.\\
&\left.+  \frac{\left( e(f, |\cdot|^{\beta+1/2}) \right)^{n_N + 1} }{1 - e(f, |\cdot|^{\beta+1/2})} \Vert g_1 \Vert_2 \right)   \to 0 
\end{split}
\end{equation*}
as $N,n_N,l\to \infty$ such that $\frac{l^{2\beta+1}}{N}\to 0$.
%-------------------------------------------------------------------------------------------------------------------------------

\subsection{Orthonormal Basis Approach}\label{subsect:OnB}

Since the series representation (\ref{eq:solution}) is sensitive to noise
and bad estimates for $v_1$, the aim is to obtain an estimation approach that uses (local)
orthonormal bases (e.g., Haar wavelets) of $L^2$. Moreover, from the numerical point
of view it is much more convenient to find a solution only on a finite interval.
For this reason, the problem of Section \ref{sect:InvProb} should be reformulated for functions on
$L^2(\R)$ with support contained in a finite interval. For
 $0 < A < \infty$, consider  
$$ U_A = \{u \in L^2(\R): \ u = 0 \ \text{a.e. on} \ \R \backslash [-A,A]\} $$
to be  the closed linear subspace of $L^2(\R)$ equipped with
the usual scalar product on $L^2(\R)$. Find
a function $g_0^{(h)} \in U_A$ that fulfills equation (\ref{eq:mainH}) for fixed $g_1^{(h)}$.
Because of the scalings on the right hand side of this equation, we have to extend the 
assumptions on $g_1^{(h)}$ and the coefficients $|f_j|$ a bit. Let $|f_1| \geq \max_{k: f_k \neq f_1} |f_k|$
be the largest coefficient and define $M = \min\{1, |f_1|\}$. Then for $g_1^{(h)} \in U_{AM}$ 
it follows that $g_1^{(h)}(f_1 \cdot) \in U_A$. 
Since $|f_1|$ is the largest coefficient, it holds moreover that $g_0^{(h)}(f_1/f_j x) = 0$,
for all $|x| > A$, i.e. $g_0^{(h)}(f_1/f_j \cdot) \in U_A$ for all $j=1,\dots,n$.
For this reason the restriciton $\varphi_{g_1^{(h)}}|_{U_A}$ of the function $\varphi_{g_1^{(h)}}$ from the proof 
of Theorem \ref{th3} is a map on $U_A$. Then one can show the following theorem with the
same arguments applied to $\varphi_{g_1^{(h)}}|_{U_A}$.  

\begin{theorem2}\label{th:2}
Let $h:\R \rightarrow \R$, $s_k$ be as in Theorem \ref{th3}, and let $g_1^{(h)} \in U_{AM}$ with
$M$ defined as before. Assume furthermore that $|f_1| \geq \max_{k: f_k \neq f_1} |f_k|$ and
relation \eqref{eq:mcond} holds.
Then there exists a unique function $g_0^{(h)} \in U_A$ such that 
\begin{equation}\label{eq:A}
 g_1^{(h)}(\cdot) = \sum\limits_{k=1}^n \frac{1}{|f_k|}\frac{h(\cdot)}{h(\frac{\cdot}{f_k})}g_0^{(h)}(\frac{\cdot}{f_k})
\end{equation}
a.e. on $[-AM, AM]$. The solution $g_0^{(h)}$ can be expressed as in (\ref{eq:solution}).
\end{theorem2}
Note that the solution $g_0^{(h)}$ fulfills the equation $\varphi_{g_1^{(h)}}|_{U_A}(g_0^{(h)}) = g_0^{(h)}$ a.e. on the whole
interval $[-A,A]$, whereas (\ref{eq:A}) holds only on $[-AM, AM]$, which  is merely the same
if $M = 1$, i.e. in the case $|f_1| \geq 1$. Notice that $g_1^{(h)} \in {U_{AM}}$ means that the random field
$X$ has a compound Poisson marginal distribution if $h(x)\equiv 1$.

The last theorem stated the existence of a solution
$g_0^{(h)}$ of the fixpoint equation $\varphi_{g_1^{(h)}}|_{U_A}(g_0^{(h)}) = g_0^{(h)}$ or equivalently for
\begin{equation}\label{eq:helpme} 
\bar{g}_1(\cdot):=\frac{h(\cdot)}{h(f_1 \cdot)}g_1^{(h)}(f_1 \cdot) = \sum\limits_{k=1}^n \frac{1}{|f_k|}
\frac{h(\cdot)}{h\left( \frac{f_1}{f_k} \cdot \right) } g_0^{(h)}\left(\frac{f_1}{f_k} \cdot \right).  
\end{equation}
Now let $(\psi_n)_{n \in \N}$ be an orthonormal basis (OnB) of $U_A$. Since $g_0^{(h)} = \sum_{j=1}^\infty 
\left\langle g_0^{(h)}, \psi_j \right\rangle \psi_j $ it holds
\begin{equation}\label{eq:fp} 
\sum\limits_{k=1}^n \frac{1}{|f_k|}
\frac{h(\cdot)}{h\left( \frac{f_1}{f_k} \cdot \right) } g_0^{(h)}\left(\frac{f_1}{f_k} \cdot \right)
= \sum\limits_{j=1}^\infty 
\left\langle g_0^{(h)}, \psi_j \right\rangle \sum\limits_{k=1}^n \frac{1}{|f_k|}
\frac{h(\cdot)}{h\left( \frac{f_1}{f_k} \cdot \right) } \psi_j\left(\frac{f_1}{f_k} \cdot \right).
\end{equation}
Note that because of $|f_1| \geq \max_{k: f_k \neq f_1} |f_k|$ the function $\psi_j(f_1/f_k \cdot)$ is
in $U_A$ for all $k \in \mathbb{N}$. Set
$$ \eta_j (\cdot) = \sum\limits_{k=1}^n \frac{1}{|f_k|}
\frac{h(\cdot)}{h\left( \frac{f_1}{f_k} \cdot \right) } \psi_j\left(\frac{f_1}{f_k} \cdot \right),
\quad j \in \mathbb{N} .$$
Then we can conclude that there exists a solution $g_0^{(h)} \in U_A$ of (\ref{eq:helpme}) if and only if
the function $\bar{g}_1$ admits a representation $\bar{g}_1 = \sum_{j=1}^\infty x_j \eta_j$ with some
$l^2$-sequence $(x_j)_{j \in \mathbb{N}}$. In this case,  a solution $g_0^{(h)} $ is given by 
$\sum_{j=1}^\infty x_j \psi_j$. It is unique if and only if the scalar sequence $(x_j)_{j \in \mathbb{N}}$
is unique. In other words, the problem is characterized by the operator $T:l^2 \rightarrow U_A$,
$$ T: x = (x_j)_{j \in \mathbb{N}} \mapsto \sum_{j=1}^\infty x_j \eta_j. $$
If $T$ is surjective there exists a solution. If it is bijective the solution is unique. 
It is clear now that under the conditions of Theorem \ref{th:2} the operator $T$
is a bijection. Nevertheless, let us reformulate this theorem in terms of the 
OnB $(\psi_l)_{l \in \mathbb{N}}$ and give another proof for it.\\

\begin{theorem5}\label{theoSolSec}
Let $(\psi_l)_{l \in \mathbb{N}}$ be an OnB of $U_A$, and let the conditions of Theorem \ref{th:2}
be fulfilled. Then there exists a unique sequence $x \in l^2$ such that the operator $T$ is one--to--one.
\end{theorem5}

\begin{proof}
We would like to show that the system $(\eta_j)_{j\in \mathbb{N}}$ is a basis for $U_A$.
First we show, by contradiction, that 
$$V:=\overline{\text{span}}((\eta_j)_{j\in \mathbb{N}}) = U_A.$$
Therefore assume that $V \subset U_A$. Since $V$ is a closed subspace of $U_A$
it follows by Riesz lemma (see e.g. \cite{Werner11}) that for any $0<\delta < 1$ there exists
a function $g_\delta \in U_A$ with $\Vert g_\delta \Vert_2=1$ 
such that $\Vert g_\delta - v \Vert_2 \geq 1-\delta$, for all $v \in V$. Now choose 
$ \delta :=  \left( 1 -e(f,h)  \right)/2  . $
Then we can 
write $g_\delta = \sum_{k=1}^\infty \left\langle g_\delta, \psi_k \right\rangle \psi_k $.
Define the sequence $x = (x_k)_{k \in \mathbb{N}} \in l^2$ via $x_k = |f_1|\left\langle 
g_\delta, \psi_k \right\rangle /n_1$, $k\in \mathbb{N}$. Since $\Vert g_\delta \Vert_2=1$
it follows $\Vert x \Vert_{2}=|f_1|/n_1$. Clearly, we have $\sum_{k=1}^\infty x_k \eta_k \in V$.
By triangle inequality, a substitution in the integral and the definition of $s_k$ it can be observed that
$$ 1 - \delta \leq \left\Vert g_\delta - \sum\limits_{k=1}^\infty x_k \eta_k \right\Vert_2= \left\Vert \frac{1}{n_1}\sum_{j: f_j \neq f_1}\frac{|f_1|}{|f_j|} \frac{h(\cdot)}{h\left( \frac{f_1}{f_j} \cdot \right) } g_\delta  \left(  \frac{f_1}{f_j} \cdot \right) \right\Vert_2 
\leq e(f,h), $$
which is a contradiction to the fact that $1-\delta >e(f,h)$, i.e. $V = U_A$. 

In the second step of the proof, we  use \cite[ Theorem 3.1.4]{Christensen03}
to show that $(\eta_j)_{j\in \mathbb{N}}$ is a basis for $U_A$. 
Therefore we have to verify the assumptions there. 
First of all, we observe that $\eta_l$ are non-zero functions, since
\begin{equation*}
\begin{split}
 \Vert\eta_j \Vert_2 & = \left\Vert \frac{n_1}{|f_1|}\psi_j 
 + \sum\limits_{k: f_k\neq f_1} \frac{1}{|f_k|}
\frac{h(\cdot)}{h\left( \frac{f_1}{f_k} \cdot \right) } \psi_j\left(\frac{f_1}{f_k} \cdot \right) 
\right\Vert_2  \geq \frac{n_1}{|f_1|}\\
& - \left\Vert \sum\limits_{k: f_k\neq f_1} \frac{1}{|f_k|}
\frac{h(\cdot)}{h\left( \frac{f_1}{f_k} \cdot \right) } \psi_j\left(\frac{f_1}{f_k} \cdot \right) 
\right\Vert_2  \geq \frac{n_1}{|f_1|} - \sum\limits_{k: f_k\neq f_1} \frac{1}{|f_k|} \left\Vert  
\frac{h(\cdot)}{h\left( \frac{f_1}{f_k} \cdot \right) } \psi_j\left(\frac{f_1}{f_k} \cdot \right) 
\right\Vert_2 \\
& \geq \frac{n_1}{|f_1|} - \sum\limits_{k: f_k\neq f_1} s_k \left( \frac{1}{|f_k|\cdot |f_1|} \right)^{1/2} 
= \frac{n_1}{|f_1|}\left(  1 - e(f,h)\right), 
\end{split}	
\end{equation*}
where the latter is strictly positive, i.e. $(\eta_j)_{j \in \N}$ is a sequence of non-zero functions 
in the Hilbert space $U_A$. Now let $(c_j)_{j \in \N}$ be an arbitrary real valued sequence and 
$m,l \in \N$ with $m \leq l$. Show that there exists a constant $K$ such that 
$\Vert \sum_{j=1}^m c_j \eta_j \Vert_2 \leq 
K \Vert \sum_{j=1}^l c_j \eta_j \Vert_2$. If $c_1 = c_2 = \dots = c_l = 0$ then
this relation is obviously  true for any choice of $K$. Otherwise,
$$ \left\|  \sum\limits_{j=1}^l c_j \eta_j \right\|_2 \geq 
\frac{n_1}{|f_1|} \left( 1- e(f,h) \right)
\left( c_1^2+\dots+c_l^2 \right)^{1/2} > 0.  $$ Thus, we have
\begin{equation*} 
\begin{split}   % stopped here
\frac{\big\Vert \sum\limits_{j=1}^m c_j \eta_j \big\Vert_2}
{\big\Vert \sum\limits_{j=1}^l c_j \eta_j \big\Vert_2}
& \leq \frac{e(f,h)\left( c_1^2+\dots+c_m^2 \right)^{1/2}}
{\left( 1-e(f,h) \right)
\left( c_1^2+\dots+c_l^2 \right)^{1/2}}  
 \leq \frac{e(f,h)}{1-e(f,h) }  =: K.
\end{split}
\end{equation*}
This means $(\eta_j)_{j\in \N}$ is a basis for $U_A$, 
i.e. for any function $f \in U_A$
there is a unique scalar sequence $(c_j(f))_{j \in \N}$ with $f = \sum_{j=1}^\infty c_j(f) \eta_j$.
Since 
$$ \Vert f \Vert_2 = \left\Vert \sum\limits_{j=1}^\infty c_j(f) \eta_j \right\Vert_2 \geq 
\frac{n_1}{|f_1|} \left( 1- e(f,h) \right)
\left( \sum\limits_{j=1}^\infty c_j^2(f) \right)^{1/2},  $$
the sequence $(c_j(f))_{j \in \N}$ is furthermore an element of $l^2$. Choosing 
$$ f = \frac{h(\cdot)}{h(f_1 \cdot)}g_1^{(h)}(f_1 \cdot) $$
completes the proof.
\end{proof}    
Note that the proof of the last theorem shows that the system $(\eta_j)_{j \in \mathbb{N}}$ is a
basis for the $L^2$-subspace $U_A$. Therefore we can orthonormalize it by Gram--Schmidt method to an
OnB $(e_j)_{j \in \mathbb{N}}$ of $U_A$ given by $e_1 = \eta_1 / ||\eta_1||_2$ and succesively 
$$e_{k} = \frac{\eta_k-\sum_{i=1}^{k-1}\left\langle \eta_k,e_i \right\rangle e_i }
{\Vert \eta_k-\sum_{i=1}^{k-1}\left\langle \eta_k,e_i \right\rangle e_i \Vert_2}, \quad k=2,3,\dots .$$
Now let $\hat{\bar{g}}_1$ be any estimator for $\bar{g}_1 \in U_A$ and
let $P_m$ be the orthogonal projection of $U_A$ onto the $m$-dimensional subspace
$V_m = \text{span} \{\eta_1,\dots,\eta_m\}=\text{span} \{e_1,\dots,e_m\}$ which is given by $P_m f = \sum_{j=1}^m 
\left\langle f,e_j \right\rangle e_j $. Define the sequence $(\hat{y}_j)_{j \in \mathbb{N}}$ by
$$ \hat{y}_j = \begin{cases}
					\left\langle \hat{\bar{g}}_1, e_j \right\rangle, & \ 1 \leq j \leq m, \\
					0, &  j > m. 
			   \end{cases} $$
Then the orthogonal projection of $\hat{\bar{g}}_1$ onto $V_m$ is
$$ \hat{\bar{g}}_{1,m} := P_m \hat{\bar{g}}_1 = \sum\limits_{j=1}^\infty \hat{y}_j e_j \ \ \left( = \sum\limits_{j=1}^m \hat{y}_j e_j \right) . $$
Now, an estimator $\hat{g}_{0,m}^{(h)}$ for $g_0^{(h)}$ will be constructed as follows:
\begin{enumerate}[1.)]
\item Let $(\hat{x}_{1,m},\dots,\hat{x}_{m,m})$ be the unique solution to
\begin{equation}\label{eq:helpmemore}
\hat{y}_j = \sum\limits_{i=1}^m \hat{x}_{i,m} \left\langle \eta_i, e_j \right\rangle, \quad j
=1,\dots,m .
\end{equation}
Set
$$ \hat{x}_i = \begin{cases}
\hat{x}_{i,m} &; \ 1 \leq i \leq m \\
0 &; \ i > m. 
\end{cases} $$
\item Then we define 
\begin{equation}\label{eq:estgnot}
 \hat{g}_{0,m} ^{(h)}= \sum_{i=1}^\infty \hat{x}_i \psi_i \ \ \left( = \sum\limits_{i=1}^m \hat{x}_i \psi_i \right).
\end{equation}
\end{enumerate}
Equation (\ref{eq:helpmemore}) comes from the fact that for any $f \in V_m$,
$\sum_{i=1}^m \lambda_i \eta_i =\sum_{i=1}^m \left\langle f, e_i \right\rangle e_i$ if and only if 
$\left\langle f, e_i \right\rangle = \sum_{j=1}^m \lambda_j \left\langle \eta_j, e_i \right\rangle$.
Note that $\left\langle e_i, \eta_j \right\rangle = 0$ whenever $i > j$  since $\eta_j$ is a linear combination of $e_1,\dots,e_j$. In particular, formula (\ref{eq:helpmemore}) stays true if $j > m$. Due to that, the system 
of linear equations there becomes diagonal and can easily be solved by backward substitution. \\

\begin{theoremub}
	Let $\bar{g}_1 \in U_A$ and $\hat{\bar{g}}_{1} \in U_A$ be an estimator of $\bar{g}_1$. Let furthermore
	$\hat{\bar{g}}_{1,m} := P_m \hat{\bar{g}}_1$ be the orthogonal projection of $\hat{\bar{g}}_{1}$
	onto $V_m$. Then under the conditions of Theorem \ref{theoSolSec} it holds for $\hat{g}_{0,m}^{(h)}$ as in (\ref{eq:estgnot}) that
	\begin{equation}\label{eq:ErrEst_OnB}
	 \| g_0^{(h)} - \hat{g}_{0,m}^{(h)} \|_\cdot  \leq \frac{|f_1|}{n_1 \big( 1 - e(f,h) \big)} \left[ 2 \left\| \sum\limits_{j=m+1}^\infty x_j \eta_j \right\|_2 + 
	\| \bar{g}_1 - \hat{\bar{g}}_{1,m} \|_\cdot \right],  
	\end{equation}
	where  $x_j=\left\langle {g}_{0}^{(h)}, \psi_j \right\rangle$, $j\in\N$.
\end{theoremub}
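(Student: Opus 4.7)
The plan is to exploit the orthonormality of $(\psi_j)_{j\in\N}$ together with the lower bound
\begin{equation*}
\left\|\sum_j c_j \eta_j\right\|_2 \;\geq\; \frac{n_1}{|f_1|}\bigl(1 - e(f,h)\bigr)\bigl\|(c_j)\bigr\|_{l^2},
\end{equation*}
which was obtained in the proof of Theorem~\ref{theoSolSec} for finite sums and carries over to $l^2$-sequences because $(\eta_j)_{j\in\N}$ is a Schauder basis of $U_A$. The strategy is to split the $L^2$-error of $\hat{g}_{0,m}^{(h)}$ into two coefficient pieces (indexed by $j\le m$ and $j>m$) in the $(\psi_j)$-expansion, and then transport each piece back to $L^2(\R)$ through the $\eta$-basis via the inequality above.

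The first step I would carry out is the key identification $\sum_{i=1}^m \hat{x}_i \eta_i = \hat{\bar{g}}_{1,m}$. Since $\eta_i$ is a linear combination of $e_1,\dots,e_i$ by Gram--Schmidt, $\langle \eta_i, e_j\rangle = 0$ whenever $j>i$. Expanding $\sum_{i=1}^m \hat{x}_i \eta_i = \sum_j\bigl(\sum_{i=1}^m \hat{x}_i \langle \eta_i,e_j\rangle\bigr) e_j$ and invoking (\ref{eq:helpmemore}) for $j\le m$ together with the triangular vanishing for $j>m\ge i$ yields $\sum_{j=1}^m \hat{y}_j e_j = \hat{\bar{g}}_{1,m}$, as desired.

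With this in hand, I would write
\begin{equation*}
g_0^{(h)} - \hat{g}_{0,m}^{(h)} \;=\; \sum_{j=1}^m (x_j - \hat{x}_j)\psi_j \;+\; \sum_{j=m+1}^\infty x_j \psi_j,
\end{equation*}
and by orthonormality of $(\psi_j)$ and the triangle inequality
\begin{equation*}
\|g_0^{(h)} - \hat{g}_{0,m}^{(h)}\|_2 \;\leq\; \bigl\|(x_j-\hat{x}_j)_{j\le m}\bigr\|_{l^2} + \bigl\|(x_j)_{j>m}\bigr\|_{l^2}.
\end{equation*}
The tail is controlled directly via the frame-type bound: $\|(x_j)_{j>m}\|_{l^2} \leq \frac{|f_1|}{n_1(1-e(f,h))}\|\sum_{j>m} x_j \eta_j\|_2$. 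For the head piece I use $\bar{g}_1=\sum_j x_j\eta_j$ combined with the identification above to get $\sum_{j=1}^m (x_j-\hat{x}_j)\eta_j = (\bar{g}_1 - \hat{\bar{g}}_{1,m}) - \sum_{j>m} x_j\eta_j$, then apply the triangle inequality in $L^2(\R)$ before invoking the frame-type bound again.

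Finally I would combine the two estimates and pass to the $\|\cdot\|_\cdot$-norm, recalling that $\sum_{j>m} x_j\eta_j$ is deterministic and hence its $\|\cdot\|_2$-norm is unchanged under taking expectations. This produces precisely (\ref{eq:ErrEst_OnB}), with the factor $2$ arising because the tail norm $\|\sum_{j>m}x_j\eta_j\|_2$ enters once from the $j>m$ piece of the $\psi$-expansion and a second time through the decomposition of the head piece. The main obstacle I anticipate is the preliminary identification $\sum_{i=1}^m \hat{x}_i \eta_i = \hat{\bar{g}}_{1,m}$: without spotting the upper-triangular Gram--Schmidt structure of the system (\ref{eq:helpmemore}), one would be forced to drag around an extra cross-term $P_m\hat{\bar g}_1 - \sum_i \hat x_i\eta_i$ and the clean form of the bound would be difficult to recover.
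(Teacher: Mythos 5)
Your proposal is correct and follows essentially the same route as the paper: the paper phrases the argument in coefficient space with respect to the orthonormal system $(e_j)$ (using $\hat{y}_j=\sum_{i=1}^m\hat{x}_i\langle\eta_i,e_j\rangle$ for all $j$, which is exactly your identification $\sum_{i=1}^m\hat{x}_i\eta_i=\hat{\bar{g}}_{1,m}$ in disguise), while you work directly with the functions; the head/tail split of the $\psi$-expansion, the frame-type lower bound $\|\sum_j c_j\eta_j\|_2\geq \frac{n_1}{|f_1|}(1-e(f,h))\|(c_j)\|_{l^2}$, and the origin of the factor $2$ are identical. No gaps.
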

\begin{proof}
	First of all, it holds
	\begin{multline*}
		\left( \sum\limits_{j=1}^\infty \left[ \sum\limits_{i=1}^m   x_i \left\langle \eta_i, e_j \right\rangle 
		- \hat{y}_j \right]^2 \right)^{1/2} 
		 = \left( \sum\limits_{j=1}^\infty \left[ \sum\limits_{i=1}^m x_i \left\langle \eta_i, e_j \right\rangle 
		- \sum\limits_{i=1}^m \hat{x}_i \left\langle \eta_i, e_j \right\rangle \right]^2 \right)^{1/2} \\
		 = \left\| \sum\limits_{i=1}^m (x_i - \hat{x}_i) \eta_i \right\|_2 
		 \geq \frac{n_1}{|f_1|}\big( 1 - e(f,h) \big) \left( \sum\limits_{i=1}^m (x_i - \hat{x}_i)^2 \right)^{1/2}, 
	\end{multline*} 
	and therefore
	\begin{align}\label{form:estim} 
	\sum\limits_{i=1}^m (x_i - \hat{x}_i)^2 & \leq \left( \frac{|f_1|}{n_1 \big( 1 - e(f,h) \big)} \right)^2 
	\sum\limits_{j=1}^\infty \left[ \sum\limits_{i=1}^m x_i \left\langle \eta_i, e_j \right\rangle 
	- \hat{y}_j \right]^2 \nonumber \\
	& = \left( \frac{|f_1|}{n_1 \big( 1 - e(f,h) \big)} \right)^2 
	\sum\limits_{j=1}^\infty \left[y_j - \hat{y}_j - \sum\limits_{i=m+1}^\infty x_i \left\langle \eta_i, e_j \right\rangle \right]^2,
	\end{align}  % stopped here
	with $(y_j)_{j \in \mathbb{N}}$ defined by $y_j = \left\langle \bar{g}_1, e_j \right\rangle = \sum_{i=1}^\infty x_i \left\langle \eta_i, e_j \right\rangle$,
	$j \in \mathbb{N}$, compare \eqref{eq:fp}. Then
	\begin{equation}\label{eq:second}
		 \| g_0^{(h)} - \hat{g}_{0,m}^{(h)} \|_\cdot = 
		\left\| \sum\limits_{j=1}^\infty x_j \psi_j - \sum\limits_{j=1}^m \hat{x}_j \psi_j  \right\|_\cdot \leq \left\| \sum\limits_{j=m+1}^\infty x_j \psi_j  \right\|_2 +
		\left\| \sum\limits_{j=1}^m (x_j - \hat{x}_j) \psi_j  \right\|_\cdot  .
			\end{equation} 
	By (\ref{form:estim}) together with the triangle inequality we get
	\begin{align*}
	\left\| \sum\limits_{j=1}^m (x_j - \hat{x}_j) \psi_j  \right\|_\cdot 
	& = \left( \mathbb{E} \sum\limits_{i=1}^m (x_i - \hat{x}_i)^2 \right)^{1/2} \\
	& \leq \frac{|f_1|}{n_1 \big( 1 - e(f,h) \big)} 
	\left[ \left( \mathbb{E} \sum\limits_{j=1}^\infty (y_j - \hat{y}_j)^2 \right)^{1/2}
	+ \left\| \sum\limits_{i=m+1}^\infty x_i \eta_i\right\|_2 \right] \\
	& =\frac{|f_1|}{n_1 \big( 1 - e(f,h) \big)} 
	\left[   
	\|   \bar{g}_1 - \hat{\bar{g}}_{1,m}\|_\cdot 
	+ \left\| \sum\limits_{i=m+1}^\infty x_i \eta_i\right\|_2 \right] .
	\end{align*}
	Taking into account that
	$ \left\| \sum\limits_{i=m+1}^\infty x_i \eta_i \right\|_2 \geq \frac{n_1}{|f_1|} \big( 1 - e(f,h) \big)
	\left\| \sum\limits_{j=m+1}^\infty x_j \psi_j \right\|_2 $
	the statement of the theorem follows by	(\ref{eq:second}).
\end{proof}

\begin{rem4}
The term $ \left\| \sum\limits_{i=m+1}^\infty x_i \eta_i \right\|_2$ in \eqref{eq:ErrEst_OnB} is the approximation error of $\bar{g}_1=\sum_{i=1}^\infty x_i \eta_i $ by the first $m$ summands of its series.
As $m\to\infty$, the upper bound  \eqref{eq:ErrEst_OnB} tends to $\frac{|f_1|}{n_1 \big( 1 - e(f,h) \big)} 
\|   \bar{g}_1 - \hat{\bar{g}}_{1}\|_\cdot$. In order to estimate $ \bar{g}_1$, the method of Section \ref{sec:est_g_1} can be used if the random field $X$ satisfies the assumptions given there. In this case, Corollaries \ref{cor:ErrEst1} and \ref{cor:ErrEst2} yield an upper bound for $\|   \bar{g}_1 - \hat{\bar{g}}_{1}\|_\cdot$ leading to $L^2$--consistent estimates of $g_0^{(h)}$.
\end{rem4}

Since the estimator in (\ref{eq:estgnot}) is strongly oscillating, a smoothed version $\tilde{g}_{0,m}^{(h)} = \hat{g}_{0,m}^{(h)} \ast K_b$
	of $\hat{g}_{0,m}^{(h)}$ is considered here, where $K_b$ is a smoothing kernel with properties \textbf{(K1)-(K3)} from Section \ref{subsect:PlugIn}. 
	It is clear that $g_0^{(h)} \in L^1(\R)$, $\hat{g}_{0,m}^{(h)} \in L^1(\R) \cap L^2(\R)$, because both are in $U_A$ by assumption. If additionally 
	$g_0^{(h)} \in H^{\delta}(\R)$ for some $\delta > 1/2$ then it immediately follows from the proof of Theorem 
	\ref{thm:ErrorSmoothedPI} that
	\begin{equation*}
	\Vert \tilde{g}_{0,m}^{(h)} - g_0^{(h)} \Vert_\cdot \leq \frac{C_K}{2 \pi} \Vert \hat{g}_{0,m}^{(h)} - g_0^{(h)} \Vert_\cdot +
	||g_0^{(h)}||_{1}^{1/2}  ||g_0^{(h)}||_{H^\delta}^{1/2} a_\delta(b)
	\end{equation*}
	with $a_\delta$ given in (\ref{eq:a_delta}). The bandwidth $b>0$ can be chosen as in Remark \ref{rem:choice_bandwidth}.

%-------------------------------------------------------------------------------------------------------------------------------

\section{Numerical Performance of the Estimators} \label{sect:Num}

\begin{figure}[ht!]
	\subfigure[Plug-In Method]{ \label{fig:1}
		\includegraphics[width = 0.42\textwidth]{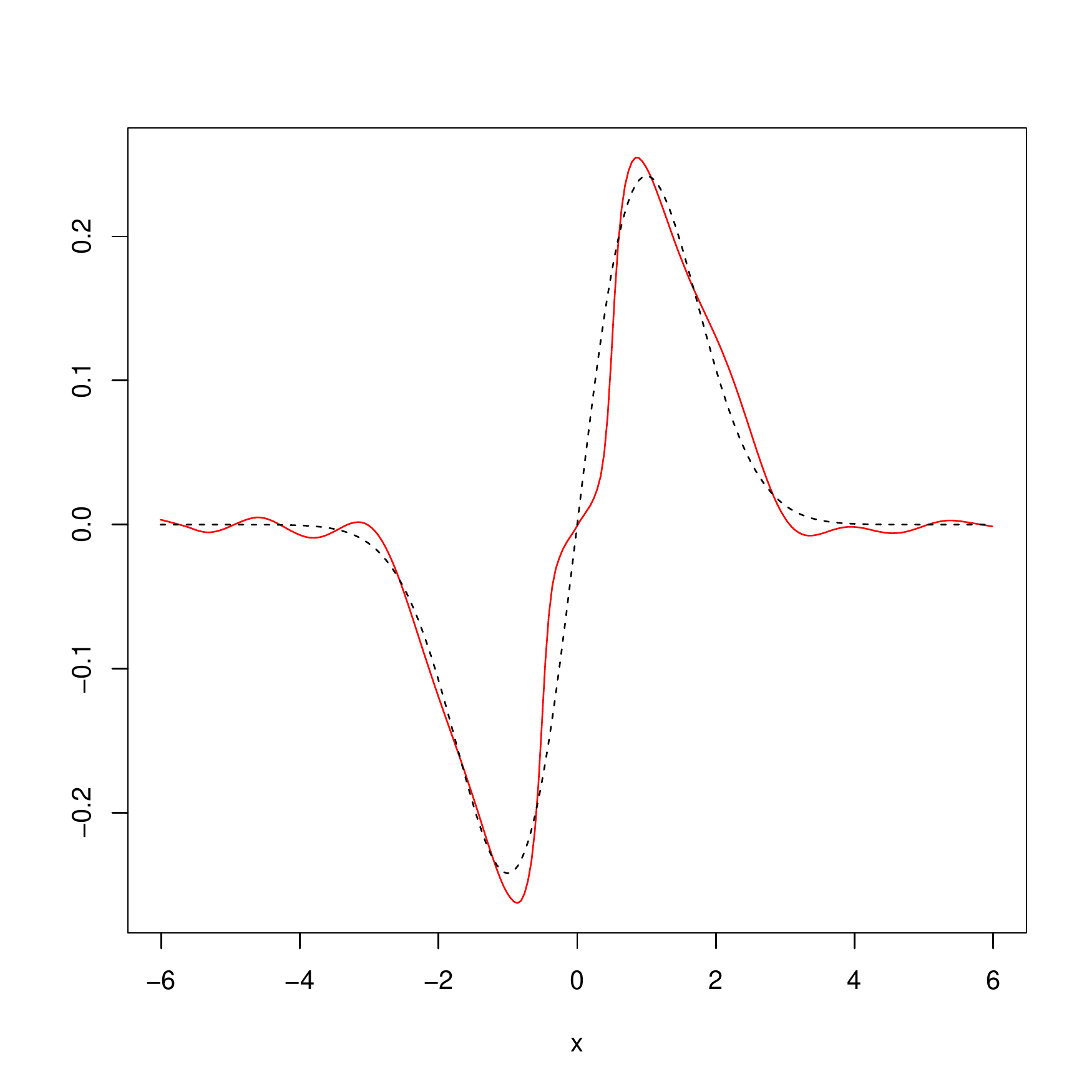}
	} \hfill \subfigure[Fourier Method]{
		\includegraphics[width = 0.42\textwidth]{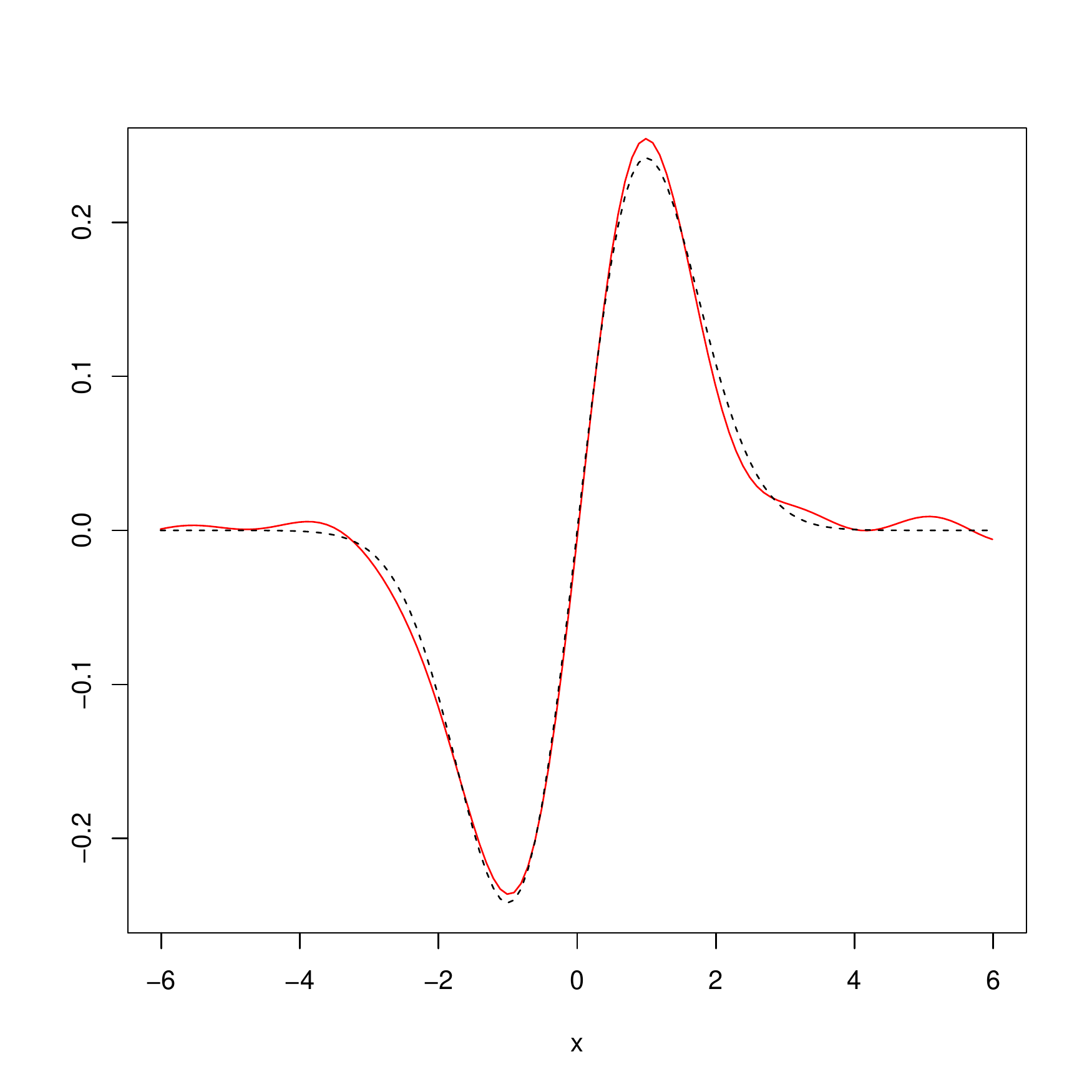}
		\label{fig:2} } 
	\subfigure[OnB Method]{ \label{fig:3}
		\includegraphics[width = 0.42\textwidth]{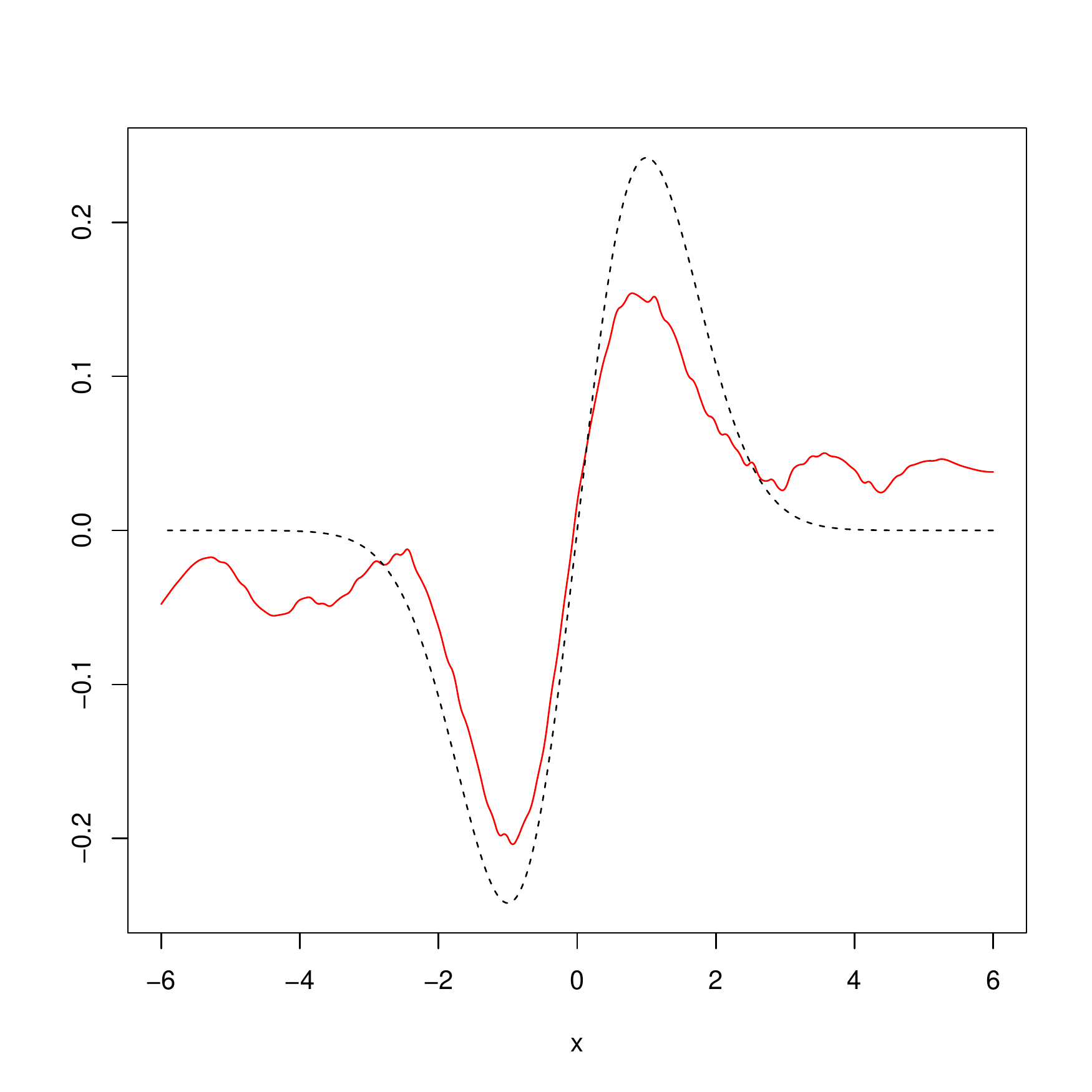}
	}
	\caption{$g_0(x) = (\sqrt{2\pi})^{-1} x \exp(-x^2 / 2 )$ (dashed line)  and a realization of the corresponding estimator (red line) for each of the three
		estimation approaches with sample size $N=10000$.}
	\label{pic:gaussian_v0}
\end{figure}

\begin{figure}[ht!]
	\subfigure[Plug-In Method]{ \label{fig:4}
		\includegraphics[width = 0.42\textwidth]{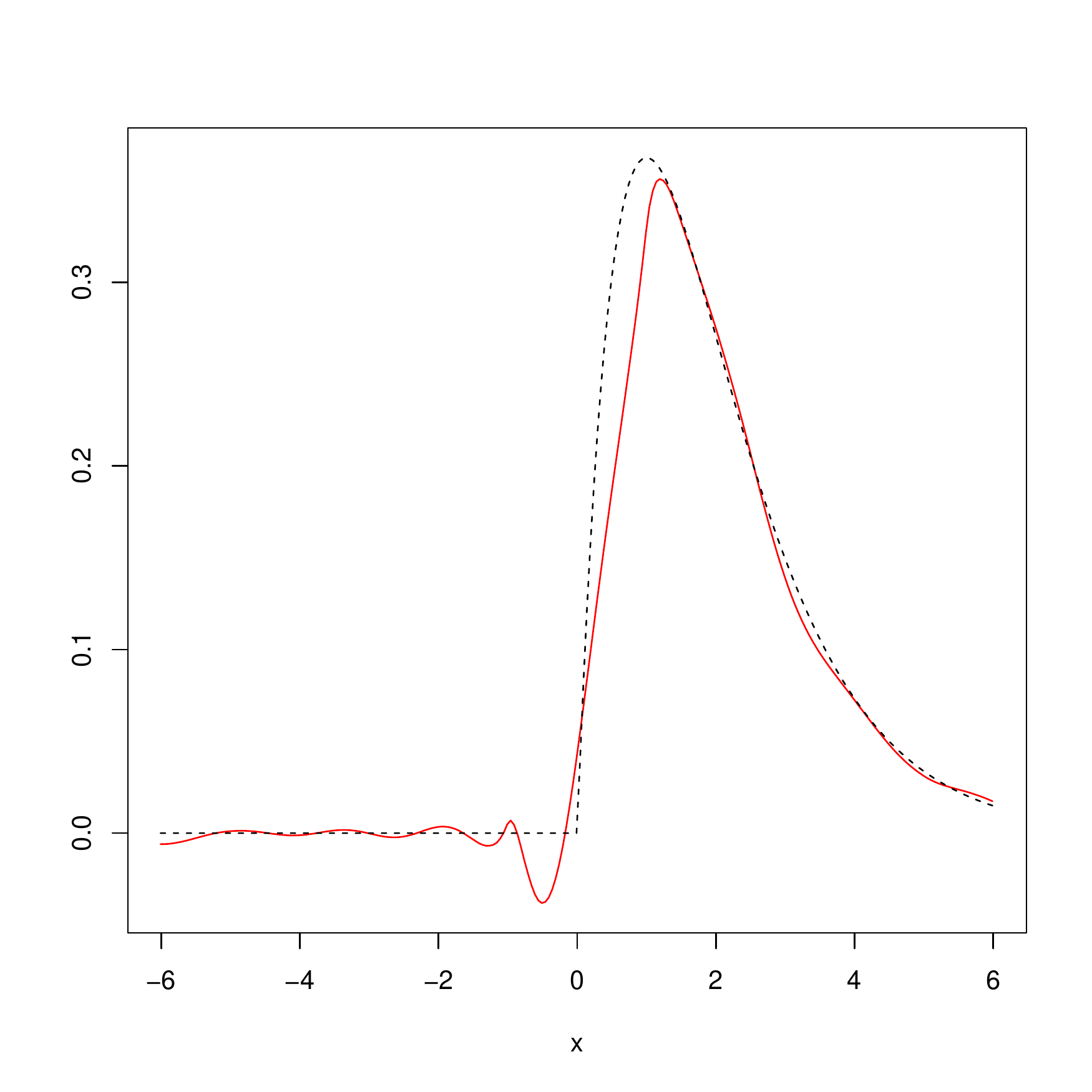}
	} \hfill \subfigure[Fourier Method]{
	\includegraphics[width = 0.42\textwidth]{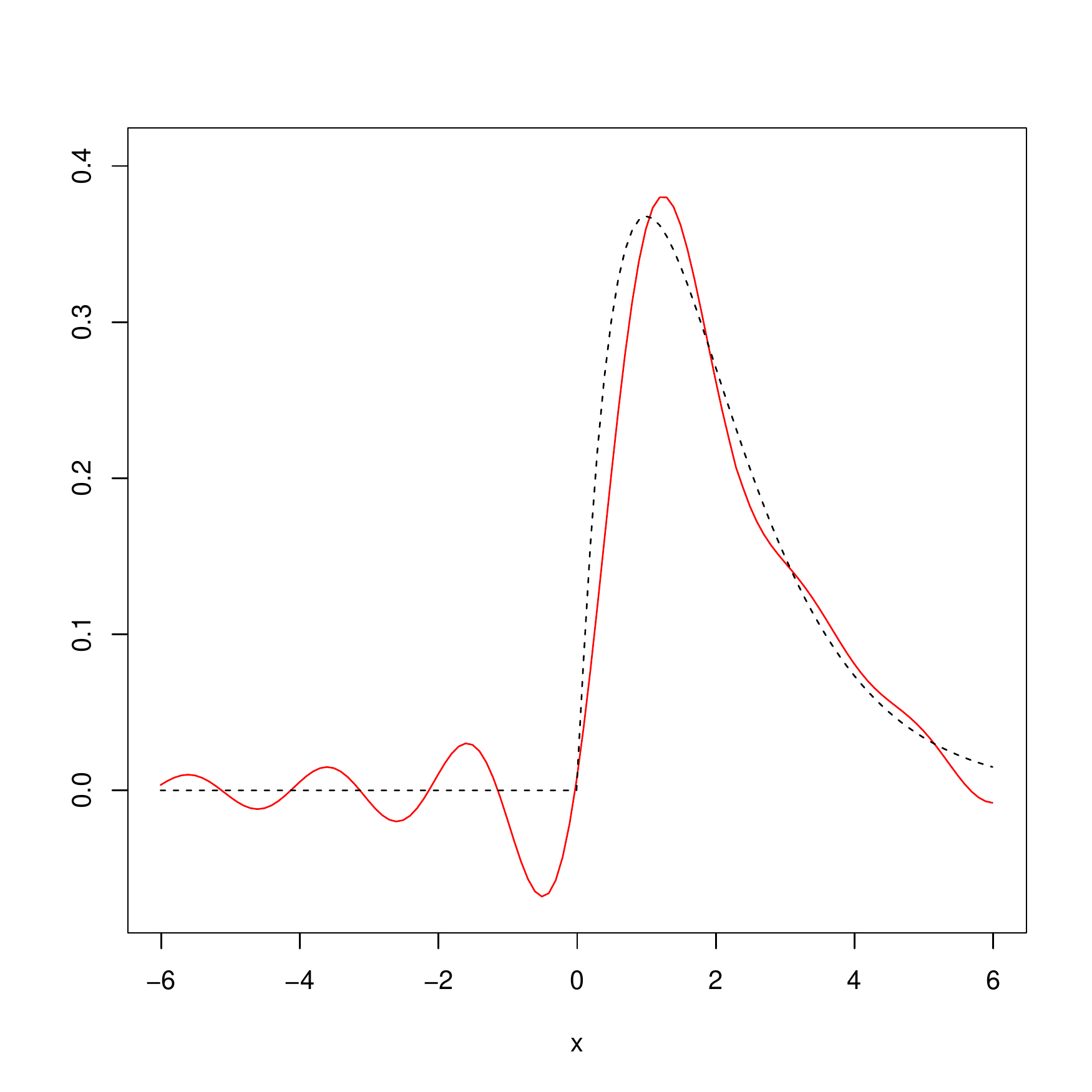}
	\label{fig:5} } 
\subfigure[OnB Method]{ \label{fig:6}
	\includegraphics[width = 0.42\textwidth]{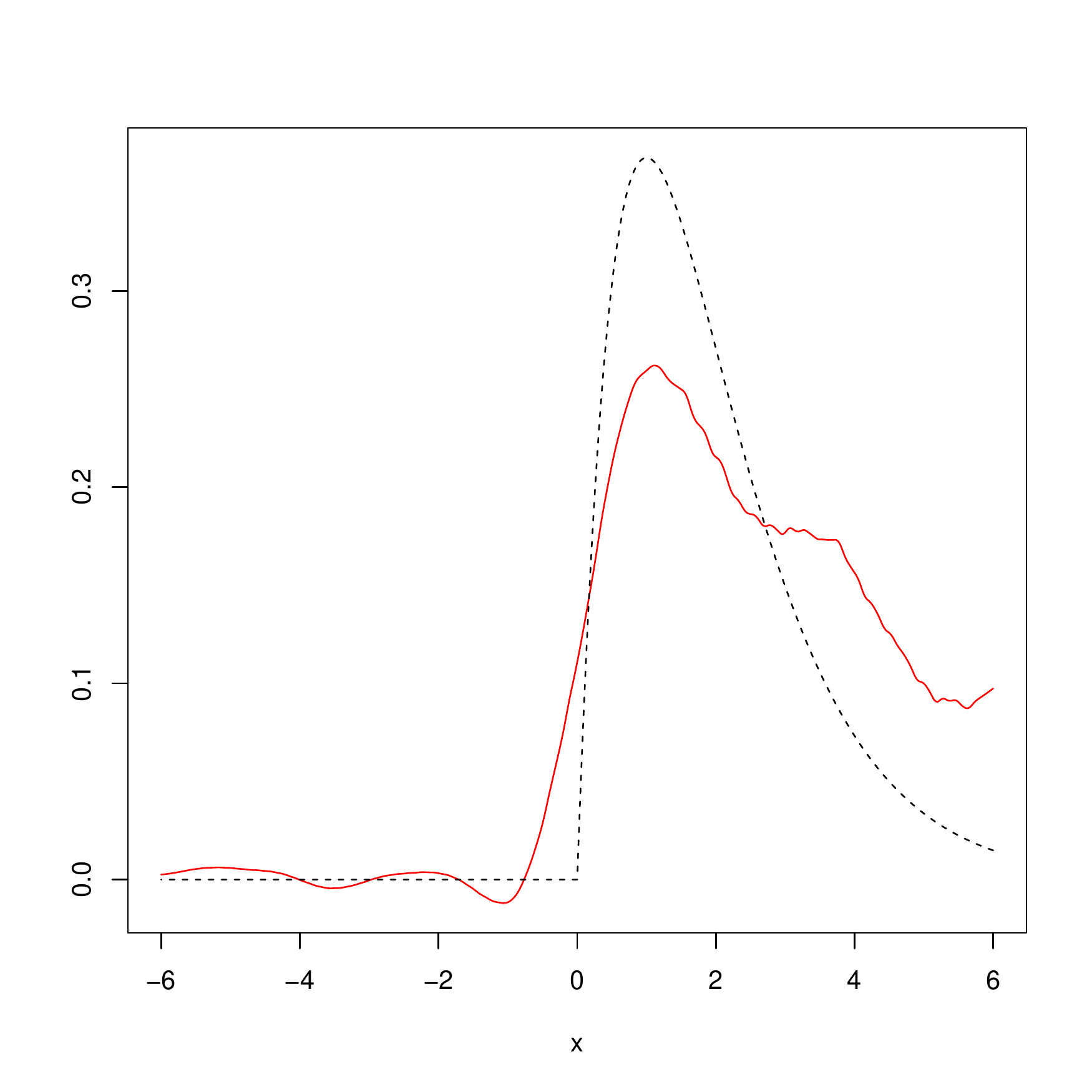}
}
\caption{$g_0(x) = x \exp(-x)\id_{(0,\infty)}(x)$ (dashed line) and a realization of the corresponding estimator (red line) for each of the three
	estimation approaches with sample size $N=10000$.}
\label{pic:exponential_v0}
\end{figure} 

In order to compare the three approaches of Section \ref{sec:es}, we consider $\Lambda(\Delta)$ to be a compound Poisson
random variable
\begin{equation*}
	\Lambda(\Delta) \overset{d}{=} \sum\limits_{k=1}^N Y_k,
\end{equation*}
where $\{Y_k\}_{k \in \mathbb{N}}$ is a sequence of independent and identically distributed random variables,
independent of $N \sim Poi(\nu_d(\Delta))$. Then for any simple function $f = \sum_{k=1}^n f_k \id_{\Delta_k}$
with $\nu_d(\Delta_k) = \nu_d(\Delta)$ for all $k = 1,\dots,n$ it holds
\begin{equation*}
	X(0) \overset{d}{=} \sum\limits_{k=1}^n f_k W_k,
\end{equation*}
where $W_1,\dots,W_n$ are i.i.d. with $W_1 \overset{d}{=} \Lambda(\Delta)$. 

In the following examples, we assumed $d=2$, $n = 4$, $f_1 = 1.3$, $f_2 = 0.2$, $f_3 = f_4 = 0.1$ as well as
$\nu_2(\Delta) = 1$. Then  $v_0$ is the density of the random variable $Y_1$, and due to
 formula \eqref{eq:v_1},  $v_1$ is given by 
\begin{equation*}
	v_1(x) =  \frac{1}{1.3} v_0\left( \frac{x}{1.3} \right) + \frac{1}{0.2} v_0\left( \frac{x}{0.2} \right) + \frac{2}{0.1} v_0\left( \frac{x}{0.1} \right) 
\end{equation*}
 or, equivalently,  
 \begin{equation*}
g_1(x) = g_0\left( \frac{x}{1.3} \right) +  g_0\left( \frac{x}{0.2} \right) + 2 g_0\left( \frac{x}{0.1} \right), 
\end{equation*}
where $g_1(x) = xv_1(x)$ and $g_0(x) = xv_0(x)$, $h(x)=x$. Note that the coefficients $f_1,\dots,f_4$ fulfill conditions of Theorem \ref{th3}, i.e. for given 
$g_1\in L^2(\R)$ there exists a solution $g_0 \in L^2(\R)$ to
the above equation. In our examples, we simulated the random field $X$ on an integer grid. The estimators for $g_0$ based on the corresponding 
sample with sample size $N=10000$ were compared to the original $g_0$  for the following examples:
\begin{align}
	Y_1 \sim N(0,1),\ \text{i.e.} \ v_0(x) & = (\sqrt{2\pi})^{-1} \exp(-x^2 / 2 ) \quad \text{(Fig. \ref{fig:1}-\ref{fig:3}) and} \label{ex:1}\\
	Y_1 \sim \mathsf{Exp} (1),\ \text{i.e.} \ v_0(x) & = \exp(-x) \id_{(0,\infty)}(x) \quad \text{(Fig. \ref{fig:4}-\ref{fig:6})} \label{ex:2}
\end{align}

For the estimators based on the Fourier method from Section \ref{subsect:Fourier}, the parameter $l=1$ is chosen due to 
Corollary \ref{cor:ErrEst2}, cf. Section \ref{sec:est_g_1}. For both the plug-in (Section \ref{subsect:PlugIn}) and the Fourier method, we used 
furthermore the cut-off parameter $n_N = 1$. For the smoothing procedure, the Epanechnikov kernel 
$$K_b(x) = 0.75 b^{-1} (1-(xb^{-1})^2) \id\{ |x|\le b\} $$ 
with bandwidths $b=0.5$ and $b=1.0$  was used in examples  (\ref{ex:1})  and (\ref{ex:2}) respectively, chosen according to Remark \ref{rem:choice_bandwidth}. 
%While the Fourier method works well in both examples, the smoothed plug-in method has  inaccuracies in $x = 0$. 
For the OnB method, Haar wavelets $\{ \psi_j\}$ on $[-A,A]$ for $A=6$ were used together with the cut--off parameter $m=7$. The parameter $l>0$ and 
the bandwidth $b>0$ for the estimator in (\ref{eq:estgnot}) (using Epanechnikov kernel $K_b$) were chosen based on a simulation study with different 
parameters. It turned out that visually the best choice for the example in (\ref{ex:1}) is $l=4.5$, $b=0.7$ whereas for the example in (\ref{ex:2}) the 
parameters $l=4.0$, $b=1.1$ turned out to be optimal. Figures \ref{pic:gaussian_v0} and \ref{pic:exponential_v0} show realizations of the estimated
 $g_0$ (red) by our methods compared to the original $g_0$ (dashed) from examples  (\ref{ex:1}) and (\ref{ex:2}).

 The empirical mean and the standard deviation of the mean square errors of our estimation (assessed upon estimation results for $g_0$ out of $100$ 
 simulations of $X$) are given in Table \ref{table:result}. It is seen there that plug-in and Fourier methods perform equally well
 whereas  the mean  error for the OnB method is significantly higher. 
Regarding their computation times (see Table \ref{table:cpuresult}), the Fourier approach outperforms the others since its algorithm  
 is at least $10$ times faster. To summarize, we recommend the Fourier method for the estimation of $v_0$ unless the plug-in approach can be used under milder assumptions on $v_0$ and $v_1$. This  essentially depends on the estimator for $v_1$ which is chosen as a plug-in.

\begin{table}[h]
	\centering
	\begin{tabular}{@{}rrrrr@{}} \toprule
		& & \multicolumn{3}{c}{Method of estimation} \\ \cmidrule(r){3-5}
		& & plug-in  & Fourier & OnB \\ \midrule
		\multirow{2}{2.3cm}{$Y_1 \sim N(0,1)$}& mean & 0.005291606 & 0.0005609035    & 0.02257974 \\
		& sd & 0.0004369446 & 0.0003471337    & 0.001865197 \\ \midrule
		\multirow{2}{2.3cm}{$Y_1 \sim \mathsf{Exp}(1)$}	& mean & 0.1240124   & 0.1306668  & 0.1446655 \\
		& sd & 0.004051844 & 0.005115684 & 0.007453711 \\ \bottomrule
	\end{tabular}
	\caption{Empirical mean and standard deviation of the mean square errors of our estimations based on $100$ simulations.}
	\label{table:result}
\end{table}

\begin{table}[h]
	\centering
	\begin{tabular}{@{}rrrrr@{}} \toprule
		& & \multicolumn{3}{c}{Method of estimation} \\ \cmidrule(r){3-5}
		& & plug-in  & Fourier & OnB \\ \midrule
		\multirow{2}{2.3cm}{$Y_1 \sim N(0,1)$}& mean & 1031.18 & 74.95    & 2726.07 \\
		& sd & 54.19 & 3.66   & 1120.59 \\ \midrule
		\multirow{2}{2.3cm}{$Y_1 \sim \mathsf{Exp}(1)$}	& mean & 1262.24 & 121.24  & 3165.08 \\
		& sd & 13.93 & 18.14 & 721.25 \\ \bottomrule
	\end{tabular}
	\caption{Mean and standard deviation of the estimation CPU-times  (in seconds). The computations were performed on a CPU 
		Intel Xeon E5-2630v3, 2.4 GHz with 128 GB RAM.}
	\label{table:cpuresult}
\end{table}

%-------------------------------------------------------------------------------------------------------------------------------
\section*{Appendix} \label{sec:Appendix}

Here we give a proof of Theorem \ref{TheoGenCat} and its corollaries. Before doing so we prove auxiliary  statements. 

\begin{lem1} \label{Le1}
	Let $Y=\{Y_t, t\in\Z^d\}$ be a random field defined in \eqref{eq:rfY} satisfying ${\rm \bf (H2)}_2$   such that $Y$ is either
	\begin{itemize}
		\item[(i)] $m$-dependent or
		\item[(ii)]  $\phi$-mixing and condition (\ref{Rosphimix}) holds.
	\end{itemize}
Furthermore, let $W\subset \Z^d$ be a finite subset, $N=\card(W)$, and let $\hat{\theta}(u) = \frac{1}{N}\sum_{t\in W}Y_te^{iuY_t}$ and $\theta(u) = \E Y_0e^{iuY_0}$. Then 
	\begin{align*}
	\E\bigl|\hat{\theta}(u)-\theta(u)\bigr|^4 \leq \frac{C}{N^2}\E|Y_0|^4,
	\end{align*}
	where $C>0$ is a constant.
\end{lem1}
\begin{proof}
		It holds that
		{\allowdisplaybreaks
			\begin{align*}
			\E \bigl|\hat{\theta}(u)-\theta(u)\bigr|^4 & =  \frac{1}{N^4}\E\Biggl[\sqrt{\Bigl(\sum_{t\in W}\xi^{(1)}_{t}(u)\Bigr)^2 +\Bigl(\sum_{t\in W}\xi^{(2)}_{t}(u)\Bigr)^2 }\Biggr]^4 \\
			& \leq \frac{2}{N^4}\biggl[\E\Bigl|\sum_{t\in W} \xi^{(1)}_{t}(u)\Bigr|^4 + \E\Bigl|\sum_{t\in W} \xi^{(2)}_{t}(u)\Bigr|^4\biggr].
			\end{align*}}
\begin{itemize}
		\item[(i)]
	    By Theorem \ref{Th1} it holds for $p=4$, $\alpha=1$ and $i=1,2$
		\begin{align*}
		&\E\Bigl|\sum_{t\in W} \xi^{(i)}_{t}(u)\Bigr|^4 \leq \Bigl(8 \sum_{t\in W}b_{t,2}(\xi_{t}^{(i)})\Bigr)^2\\
		& = 2^6 \left( \sum_{t\in W}\Vert \xi_{t}^{(i)}(u)\Vert_2^2 + \underbrace{\sum_{k\in V_t^1}\bigl\Vert\xi^{(i)}_{k}(u)
			\E_{\|k-t\|_\infty}\bigl[\xi^{(i)}_{t}(u)\bigr]\bigr\Vert_1}_{:=D}\right)^2.
		\end{align*}
		To determine expression $D$ it is useful to decompose it into two parts. The first part consists of all $k$ for which $\|k\|_\infty>m$ and the second part contains all other $k$. Hence,
		\begin{align*}
		D = \sum_{\substack{k\in V_t^1 \\ \|k-t\|_\infty> m}} \bigl\Vert\xi^{(i)}_{k}(u) \E_{\|k-t\|_\infty}\bigl[\xi^{(i)}_{t}(u)\bigr]\bigr\Vert_1 +\sum_{\substack{k\in V_t^1 \\ \| k-t\|_\infty\leq m}} \bigl\Vert\xi^{(i)}_{k}(u) \E_{\| k-t\|_\infty}\bigl[\xi^{(i)}_{t}(u)\bigr]\bigr\Vert_1.
		\end{align*}
		For the first part it holds due to $m$--dependence of $\{\xi^{(i)}_{t}(u)\}$ that
			$$
			\sum_{\substack{k\in V_t^1 \\ \|k-t\|_\infty> m}}\bigl\Vert \xi^{(i)}_{k}(u) \E\bigl[\xi^{(i)}_{t}(u)|\F_{V_t^{\|k-t\|_\infty}}\bigr]\bigr\Vert_1 = \sum_{\substack{k\in V_t^1 \\ \|k-t\|_\infty> m}}\bigl\Vert \xi^{(i)}_{k}(u) \E\bigl[\xi^{(i)}_{t}(u)\bigr]\bigr\Vert_1=0,
			$$
since $\xi^{(i)}_{t}(u)$ is centered. Furthermore, for the second sum in expression $D$ it follows by H\"older inequality that
			\begin{align}\label{eq:unten}
			& \sum_{\substack{k\in V_t^1 \\ \|k-t\|_\infty\leq m}} \bigl\Vert\xi^{(i)}_{k}(u) \E\bigl[\xi^{(i)}_{t}(u)|\F_{V_t^{\|k-t\|_\infty}}\bigr]\bigr\Vert_1  \nonumber\\
			 &\leq \!\!\!\!\! \
			 \sum_{\substack{k\in V_t^1 \\ \|k-t\|_\infty\leq m}} \!\!\!\!\!  \bigl\Vert\xi^{(i)}_{k}(u) \bigr\Vert_2 \bigl \Vert\E\bigl[\xi^{(i)}_{t}(u)|\F_{V_t^{\|k-t\|_\infty}}\bigr]\bigr\Vert_2 
			 \leq 
			\bigl\Vert\xi^{(i)}_{ t}(u)\bigr\Vert_2 \!\!\!\!\! \sum_{\substack{k\in V_t^1 \\ \|k-t\|_\infty\leq m}} \!\!\!\!\!\bigl\Vert\xi^{(i)}_{k}(u) \bigr\Vert_2. 
			\end{align}
		Let $\tilde{V}_t^1:=\{k\in V_t^1: \|k-t\|_\infty\leq m\}$ and $n_t:=\card(\tilde{V}_t^1) $.
		 This set is shown in Figure \ref{FigtildeVi1} for $d=2$.
		\begin{figure}[h!]
			\centering
			\includegraphics[width = 0.6\textwidth]{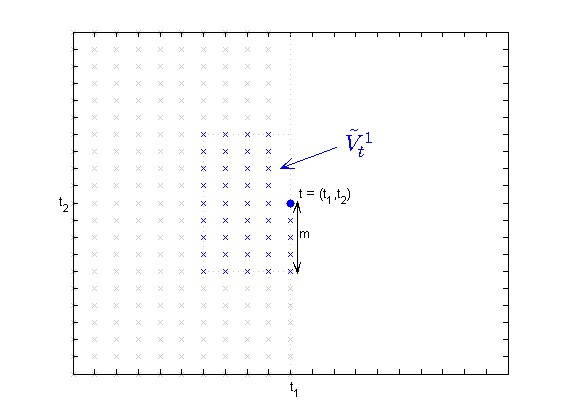}
			\caption[The set $\tilde{V}_t^1=\{k\in V_t^1, \|k-t\|_\infty\leq m\}$.]{The set $\tilde{V}_t^1=\{k\in V_t^1,\, \|k-t\|_\infty\leq m\}$ (blue points) for $m\in\N$.}
			\label{FigtildeVi1}
		\end{figure}
		
		Note that for $i=1,2$ due to stationarity of $Y$
		\begin{align} \label{AbschaetzExihochp}
		\E\bigl|\xi^{(i)}_{t}(u)\bigr|^p \leq 2^p\E|Y_0|^p, \quad p= 1,\ldots, 4.
		\end{align}			
		Therefore, for all $t\in W$ and $i=1,2$ it holds that
		$\bigl\Vert \xi^{(i)}_{t}(u)\bigr\Vert_2\le   2\Vert Y_0 \Vert_2.$
		Applying this to (\ref{eq:unten}), we get 
		$
		\bigl\Vert\xi^{(i)}_{t}(u)\bigr\Vert_2 \sum_{k\in \tilde{V}_t^1} \bigl\Vert\xi^{(i)}_{k}(u) \bigr\Vert_2\leq  4n_t\Vert Y_0\Vert_2^2.
		$
		Moreover, it follows
		\begin{align}  \nonumber
		\E&\Bigl|\sum_{t\in W} \xi^{(i)}_{t}(u)\Bigr|^4 \leq 2^6\Bigl(\sum_{t\in W}\bigl( 2\Vert Y_0 \Vert_2^2 + 4n_t\Vert Y_0\Vert_2^2\bigr)\Bigr)^2 \\ 
		&\leq 2^6\Bigl(2N \Vert Y_0 \Vert_2^2 + 4n^*N\Vert Y_0\Vert_2^2\Bigr)^2=2^8 N^2
		\Vert Y_0 \Vert_2^4(1+2n^*)^2 \label{Gl11}
		\end{align}
		with $n^*:=\underset{t\in W}{\max} \{n_t\}$.
		By Ljapunov inequality, it holds
		\begin{align*}
		\E\bigl|\hat{\theta}(u)-\theta(u)\bigr|^4 \leq  \frac{2^{10}}{N^2}(1+2n^*)^2 \E|Y_0|^4.
		\end{align*}
		\item[(ii)] Using Theorem \ref{Th3} with $p=4$ and applying the Ljapunov inequality we get 
		\begin{align*}
		&\E\Bigl|\sum_{t\in W} \xi^{(i)}_{t}(u)\Bigr|^4 \leq C_i\cdot \max\biggl\{\sum_{t\in W}\E\bigl|\xi^{(i)}_{t}(u)\bigr|^4,\Bigl(\sum_{t\in W}\E\bigl|\xi^{(i)}_{t}(u)\bigr|^2\Bigr)^2\biggr\}\\
		&\leq C_i\cdot \max\biggl\{\sum_{t\in W}\E\bigl|\xi^{(i)}_{t}(u)\bigr|^4,\Bigl(\sum_{t\in W}\bigl(\E\bigl|\xi^{(i)}_{t}(u)\bigr|^4\bigr)^{1/2}\Bigr)^2\biggr\}\\
		&=C_i\cdot \max\Bigl\{N\E\bigl|\xi^{(i)}_{0}(u)\bigr|^4,N^2\E\bigl|\xi^{(i)}_{0}(u)\bigr|^4\Bigr\} = C_i N^2\E\bigl|\xi^{(i)}_{0}(u)\bigr|^4 \leq 2^4C_i N^2\E|Y_0|^4
		\end{align*}
		for some constants $C_i>0$, $i=1,2$, where the last inequality follows by equation (\ref{AbschaetzExihochp}).
		Thus, we have
		\begin{align*}
		\E|\hat{\theta}(u)-\theta(u)|^4 \leq \frac{2}{N^4}\Bigl[2^4C_1N^2\E|Y_0|^4+ 2^4C_2N^2\E|Y_0|^4\Bigr] = \frac{C}{N^2}\E|Y_0|^4,
		\end{align*}
		where $C = 2^5(C_1+C_2)>0$ is constant.
	\end{itemize}
\end{proof}

If assumption (i) holds then the constant $C$ is given by $C = 2^{10}(1+2n^*)^2$, where $n^*\le m^d$ is the maximum over the cardinalities of the sets $\tilde{V}_t^1$ for every $t\in W$. Therefore, in the first case the constant $C$ depends on $m$. In the second case the constant $C = 2^5(C_1+C_2)$ depends on the   mixing coefficient $\phi_{u,v}(r)$ by Theorem \ref{Th3}.

\begin{lem2}
	\label{Le2}
	Let $\hat{\psi}(u) = \frac{1}{N}\sum_{t\in W}e^{iuY_t}$ and $\psi(u) = \E e^{iuY_0}$ where $N=\card(W)$. Under the assumptions of Lemma \ref{Le1}  for $p\geq2$ there exists a constant $C_p>0$ such that
	\begin{align*}
	\E\bigl|\hat{\psi}(u) - \psi(u)\bigr|^p\leq \frac{C_p}{N^{p/2}}.
	\end{align*}
\end{lem2}
\begin{proof}
	Since $x\mapsto |x|^p$, $p\ge 2$ is a convex function  it holds
		\begin{align}\nonumber
		\E\bigl|\hat{\psi}(u)-\psi(u)\bigr|^p 
		& = \frac{1}{N^p}\E\Bigl|\Bigl(\sum_{t\in W}\tilde{\xi}^{(1)}_{t}(u)\Bigr)^2 + \Bigl(\sum_{t\in W}\tilde{\xi}^{(2)}_{t}(u)\Bigr)^2\Bigr|^{p/2}\\ \label{Abschaetzungpsi}
		& \leq \frac{2^{p/2-1}}{N^p}\biggl[\E\Bigl|\sum_{t\in W}\tilde{\xi}^{(1)}_{t}(u)\Bigr|^p +\E\Bigl|\sum_{t\in W}\tilde{\xi}^{(2)}_{t}(u)\Bigr|^p \biggr].	
		\end{align}
		
		\begin{itemize}
		\item[(i)]
		Applying Theorem \ref{Th1} with  $\alpha=1$ we get for $i=1,2$
		\begin{align*}
		\E\Bigl|\sum_{t\in W}\tilde{\xi}^{(i)}_{t}(u)\Bigr|^p  \leq \Bigl(2p\sum_{t\in W}\bigl(\bigl\Vert\tilde{\xi}^{(i)}_{t}(u)\bigr\Vert_{2}^2 
		+ \sum_{k\in V_t^1}\bigl\Vert \tilde{\xi}^{(i)}_{k}(u)\E_{\|k-t\|_\infty}\bigl[\tilde{\xi}^{(i)}_{t}(u)\bigr]\bigr\Vert_{1}\bigr)\Bigr)^{p/2}.
		\end{align*}
		Since 
		\begin{align}
		\label{AbschaetzExihochp2}
		\bigl|\tilde{\xi}^{(i)}_{t}(u)\bigr|\leq 2   \quad \mbox{ a.s. for all } t\in \Z^d, \; u\in \R,\; i=1,2
		\end{align}
		it follows
		$
		\bigl\Vert\tilde{\xi}^{(i)}_{ t}(u)\bigr\Vert_{2}^2 \leq  4.
		$
		Analogously to the calculations in the proof of Lemma \ref{Le1} (i) we observe
			\begin{align*}
			&\sum_{k\in V_t^1}\bigl\Vert \tilde{\xi}^{(i)}_{k}(u)\E_{\|k-t\|_\infty}\bigl[\tilde{\xi}^{(i)}_{t}(u)\bigr]\bigr\Vert_{1}  
			= \sum_{k\in \tilde{V}_t^1}\bigl\Vert \tilde{\xi}^{(i)}_{k}(u)\E\bigl[\tilde{\xi}^{(i)}_{t}(u)|\F_{V_t^{\|k-t\|_\infty}}\bigr]\bigr\Vert_{1}\leq 4n_t,
			\end{align*}
		and hence
		\begin{equation*}
		\E\Bigl|\sum_{t\in W}\tilde{\xi}^{(i)}_{t}(u)\Bigr|^p \leq \Bigl(2p\sum_{t\in W} (4+4n_t)\Bigr)^{p/2} \leq 2^{\frac{3}{2}p}p^{p/2}(N(1+n^*))^{p/2}.
		\end{equation*}
		So all in all we get from \eqref{Abschaetzungpsi} that
		$
		\E \bigl|\hat{\psi}(u) - \psi(u)\bigr|^p \leq  \frac{C_p}{N^{p/2}}
		$
		for the constant $C_p=2^{2p} p^{p/2} (1+n^*)^{p/2}>0$ and $n^*\le m^d$.
		
		\item[(ii)] Using Theorem \ref{Th3} and  inequality \eqref{AbschaetzExihochp2} it follows for $p\geq 2$ and $i=1,2$ that
		{\allowdisplaybreaks
			\begin{align*}
			&\E\Bigl|\sum_{t\in W}\tilde{\xi}^{(i)}_{t}(u)\Bigr|^p \leq C_i\cdot \max\biggl\{\sum_{t\in W}\E\bigl|\tilde{\xi}^{(i)}_{t}(u)\bigr|^p,\Bigl(\sum_{t\in W}\E\bigl|\tilde{\xi}^{(i)}_{t}(u)\bigr|^2\Bigr)^{p/2}\biggr\} \\
			& \leq C_i\cdot \max\Bigl\{N\E\bigl|\tilde{\xi}^{(i)}_{t}(u)\bigr|^p,N^{p/2}\left( \E\bigl|\tilde{\xi}^{(i)}_{t}(u)\bigr|^2 \right)^{p/2} \Bigr\} \le  C_i N^{p/2}2^p.
			\end{align*}}
		By equation (\ref{Abschaetzungpsi}) it finally follows
		$
		\E \bigl|\hat{\psi}(u)-\psi(u)\bigr|^p \leq \frac{C_p}{N^{p/2}},
		$
		where $C_p = 2^{\frac{3}{2}p-1}(C_1+C_2)>0$ is a constant depending on $p$ and the  mixing coefficient of $\{  \tilde{\xi}_t^{(i)} \}$, $i=1,2$.
	\end{itemize}
\end{proof} 
The following lemma is a generalization of \cite[Lemma 2.1]{hoessjer} (proven there for independent random variables and $p=1$) to the case of weakly dependent random fields.
\begin{lem3} \label{Le3}
	Under the assumptions of Lemma \ref{Le1} together with condition (\ref{eq:B})  there exists a constant $C>0$ such that for $p\in \N$ 
	\begin{align}
	\E\left|\frac{1}{\tilde{\psi}(u)}-\frac{1}{\psi(u)}\right|^{2p} \leq C\cdot \min\Biggl\{\frac{N^{-p}}{|\psi(u)|^{4p}},\frac{1}{|\psi(u)|^{2p}}\Biggr\}.
	\end{align}
\end{lem3}
\begin{proof}
	\begin{itemize}
		\item[1.)] Let $|\psi(u)|<2N^{-1/2}$. Then it holds
			\begin{align*}
			\E &\left|\frac{1}{\tilde{\psi}(u)} -\frac{1}{\psi(u)}\right|^{2p} = \E \left|\frac{\id\{|\hat{\psi}(u)|\geq N^{-1/2}\}}{\hat{\psi}(u)}-\frac{1}{\psi(u)}\right|^{2p} \\
			&= \E\left|\frac{\id\{|\hat{\psi}(u)|\geq N^{-1/2}\}\cdot (\psi(u)-\hat{\psi}(u))}{\hat{\psi}(u)\psi(u)}+\frac{ \hat{\psi}(u)\id\{|\hat{\psi}(u)|< N^{-1/2}\}}{\hat{\psi}(u)\psi(u)} \right|^{2p}\\
			& \leq 2^{2p-1}\Biggl(\frac{1}{|\psi(u)|^{2p}} P(|\hat{\psi}(u)|< N^{-1/2}) + \E \left[\frac{|\psi(u)- \hat{\psi}(u)|^{2p}} {|\hat{\psi}(u)|^{2p}|\psi(u)|^{2p}}\id\{|\hat{\psi}(u)|\geq N^{-1/2}\}\right]\Biggr)\\
			& \leq 2^{2p-1}\Biggl(\frac{1}{|\psi(u)|^{2p}} + \frac{C_{2p} N^{-p}}{N^{-p}|\psi(u)|^{2p}} \Biggr)= \mathcal{O}\left(\frac{1}{|\psi(u)|^{2p}}\right),
			\end{align*}
		where the last inequality follows by Lemma \ref{Le2} and the fact that an indicator is always smaller or equal than $1$.
		In this case, we get for $|\psi(u)|<2N^{-1/2}$ that
		\begin{align*}
		\frac{N^{-p}}{|\psi(u)|^{4p}} = \frac{N^{-p}}{|\psi(u)|^{2p}}\cdot\frac{1}{|\psi(u)|^{2p}}> \frac{N^{-p}}{|\psi(u)|^{2p}}\cdot\frac{N^p}{2^{2p}} = \frac{2^{-2p}}{|\psi(u)|^{2p}}.
		\end{align*}
		\item[2.)] Let $|\psi(u)|\geq 2N^{-1/2}$. Then we get
		{\allowdisplaybreaks
			\begin{align}\nonumber
			P& (|\hat{\psi}(u)|<N^{-1/2}) 
			= P(|\psi(u)|-|\hat{\psi}(u)|>|\psi(u)|-N^{-1/2})\\ \nonumber
			&\leq P\Bigl(|\hat{\psi}(u)-\psi(u)|>\frac{1}{2}|\psi(u)|\Bigr) = P\Bigl(\Bigl|\sum_{t\in W}\bigl(e^{iuY_t}-\E e^{iuY_0}\bigr)\Bigr|> \frac{N}{2}|\psi(u)|\Bigr)\\ \nonumber
			& = P \Bigl( \sqrt{\Bigl(\sum_{t\in W}\tilde{\xi}_{t}^{(1)}(u)\Bigr)^2 + \Bigl(\sum_{t\in W}\tilde{\xi}_{t}^{(2)}(u)\Bigr)^2}> \frac{N}{2}|\psi(u)|\Bigr) \\ \nonumber
			&\leq P \Bigl(\max_{i=1,2}\Bigl\{\Bigl|\sum_{t\in W} \tilde{\xi}_{t}^{(i)}(u)\Bigr|\Bigr\}>\frac{N}{2\sqrt{2}}|\psi(u)|\Bigr)\\ 
			& \leq P  \Bigl(\Bigl|\sum_{t\in W} \tilde{\xi}_{t}^{(1)}(u)\Bigr|>\frac{N}{2\sqrt{2}}|\psi(u)|\Bigr) + P \Bigl(\Bigl|\sum_{t\in W} \tilde{\xi}_{ t}^{(2)}(u)\Bigr|>\frac{N}{2\sqrt{2}}|\psi(u)|\Bigr).  \label{GlnachBernstein}
			\end{align}}
		To calculate this probability, we consider assumptions (i) and (ii) separately.
		\begin{itemize}
			\item[(i)] Here we can apply Theorem \ref{Th2} and we get for $i=1,2$
			\begin{align*}
			P  \Bigl(\Bigl|\sum_{t\in W} \tilde{\xi}_{t}^{(i)}(u)\Bigr|>\frac{N}{2\sqrt{2}}|\psi(u)|\Bigr)\leq \exp\biggl\{\frac{1}{e}- \frac{\bigl(\frac{N}{2\sqrt{2}}|\psi(u)|\bigr)^2}{4eb_i}\biggr\},
			\end{align*}
			where 
			\begin{align*}
			b_i = \sum_{t\in W} b_{t,\infty}(\tilde{\xi}^{(i)})
			= \sum_{t\in W} \bigl(\bigl\Vert \bigl(\tilde{\xi}_{t}^{(i)}(u)\bigr)^2\bigr\Vert_\infty + \sum_{k\in V_t^1} \bigl\Vert \tilde{\xi}_{k}^{(i)}(u) \E_{\|k-t\|_\infty} \bigl[\tilde{\xi}_{t}^{(i)}(u)\bigr]\bigr\Vert_\infty\bigr)
			\end{align*}
			and $\Vert Z \Vert_\infty := \inf\{c>0 : P(|Z|>c)=0\}$ for a random variable $Z$.
			By inequality \eqref{AbschaetzExihochp2}  and $m$-dependence
				\begin{align*}
				&\sum_{k\in V_t^1}\bigl\Vert \tilde{\xi}_{k}^{(i)}(u)\E_{\|k-t\|_\infty} \bigl[\tilde{\xi}_{t}^{(i)}(u)\bigr]\bigr\Vert_\infty 
				=  \sum_{k\in \tilde{V}_t^1}\bigl\Vert \tilde{\xi}_{k}^{(i)}(u) \E_{\|k-t\|_\infty} \bigl[\tilde{\xi}_{t}^{(i)}(u) \bigr]\bigr\Vert_\infty \le 4n_t.
				\end{align*}	
			Therefore, $b_i$ can be estimated as 
			$
			b_i \leq \sum_{t\in W}(4+4n_t)\leq 4N(1+n^*),
			$ $i=1,2$,
			with $n^*$ as in the proof of Lemma \ref{Le1}.
			For expression (\ref{GlnachBernstein}) we get
			\begin{align*}
			&P  \Bigl(\Bigl|\sum_{t\in W} \tilde{\xi}_{t}^{(1)}(u)\Bigr|>\frac{N}{2\sqrt{2}}|\psi(u)|\Bigr) + P \Bigl(\Bigl|\sum_{t\in W} \tilde{\xi}_{ t}^{(2)}(u)\Bigr|>\frac{N}{2\sqrt{2}}|\psi(u)|\Bigr)\\
			&\leq  2\cdot \exp\biggl\{\frac{1}{e}-\frac{N|\psi(u)|^2}{128e(1+n^*)}\biggr\}= \mathcal{O}\Bigl(\frac{N^{-p}}{|\psi(u)|^{2p}}\Bigr).
			\end{align*}
			
			\item[(ii)] Apply Theorem \ref{Th4} to  $\{  \tilde{\xi}_t^{(i)}(u) \}$ with $a_t=1$ for all $t\in W$ and $h=2$. Then, $A(W)=N$, and we  have						\begin{align*}
				&P  \Bigl(\Bigl|\sum_{t\in W} \tilde{\xi}_{t}^{(1)}(u)\Bigr|>\frac{N}{2\sqrt{2}}|\psi(u)|\Bigr) + P \Bigl(\Bigl|\sum_{t\in W} \tilde{\xi}_{ t}^{(2)}(u)\Bigr|>\frac{N}{2\sqrt{2}}|\psi(u)|\Bigr)\\ 
				&\leq 2\cdot \exp\Bigl\{\frac{1}{e}-\frac{\frac{N^2}{8}|\psi(u)|^2}{16(1+B(\phi))Ne}\Bigr\} = 2\cdot \exp \Bigl\{\frac{1}{e}- \frac{N|\psi(u)|^2}{128(1+B(\phi))e}\Bigr\}\\& = \mathcal{O}\Bigl(\frac{N^{-p}}{|\psi(u)|^{2p}}\Bigr).
				\end{align*}
			
		\end{itemize}
		So we get in both cases
		\begin{align*}
		P (|\hat{\psi}(u)|<N^{-1/2}) = \mathcal{O}\Bigl(\frac{N^{-p}}{|\psi(u)|^{2p}}\Bigr).
		\end{align*}
		
		It holds that 
			\begin{align*}
			&\frac{1}{|\hat{\psi}(u)|^{2p}}= \frac{|\psi(u)|^{2p}}{|\psi(u)|^{2p}|\hat{\psi}(u)|^{2p}}= \left(\frac{|\psi(u)-\hat{\psi}(u)+\hat{\psi}(u)|^2}{|\psi(u)|^2|\hat{\psi}(u)|^2}\right)^p\\
			&\leq \left(\frac{1}{|\psi(u)|^2}+\frac{|\hat{\psi}(u)-\psi(u)|^2} {|\hat{\psi}(u)|^2|\psi(u)|^2}\right)^p = \frac{1}{|\psi(u)|^{2p}}\left(1+\frac{|\hat{\psi}(u)-\psi(u)|^2 }{|\hat{\psi}(u)|^2}\right)^p.
			\end{align*}
		Applying the binomial theorem and $|\hat{\psi}(u)|\geq N^{-1/2}$ we get
			\begin{align*}
			&\frac{1}{|\psi(u)|^{2p}}\left(1+\frac{|\hat{\psi}(u)-\psi(u)|^2 }{|\hat{\psi}(u)|^2}\right)^p = \frac{1}{|\psi(u)|^{2p}}\sum_{k=0}^p\binom{p}{k}\frac{|\hat{\psi}(u)-\psi(u)|^{2k}}{|\hat{\psi}(u)|^{2k}}\\
			&\leq \frac{1}{|\psi(u)|^{2p}}\sum_{k=0}^p\binom{p}{k} \frac{|\hat{\psi}(u)-\psi(u)|^{2k}}{N^{-k}}. %\leq \frac{1}{|\psi(u)|^{2p}}\sum_{k=0}^p\binom{p}{k} |\hat{\psi}(u)-\psi(u)|^{2k}N^k.
			\end{align*}
		Therefore,
			\begin{align*}
			&\E \Biggl[\frac{|\hat{\psi}(u)-\psi(u)|^{2p}}{|\hat{\psi} (u)|^{2p}|\psi(u)|^{2p}}\id\{|\hat{\psi}(u)|\geq N^{-1/2}\}\Biggr] \leq \frac{1}{|\psi(u)|^{4p}}\biggl[\sum_{k=0}^p\binom{p}{k}\E|\hat{\psi}(u)-\psi(u)|^{2k+2p}N^k\biggr]\\
			&\leq \frac{1}{|\psi(u)|^{4p}}\biggl[\sum_{k=0}^p \binom{p}{k}C_{2k+2p} N^{-k-p}N^k\biggr] = \mathcal{O}\Bigl(\frac{N^{-p}}{|\psi(u)|^{4p}}\Bigr).
			\end{align*}
		So all in all, it holds
			\begin{align*}
			&\E \left|\frac{\id\{|\hat{\psi}(u)|\geq N^{-1/2}\}}{\hat{\psi}(u)}-\frac{1}{\psi(u)}\right|^{2p}\leq \frac{1}{|\psi(u)|^{2p}}P(|\hat{\psi}(u)|<N^{-1/2})\\ 
			&+\E \Biggl[\frac{|\hat{\psi}(u)- \psi(u)|^{2p}}{|\hat{\psi} (u)|^{2p}|\psi(u)|^{2p}} \id\{|\hat{\psi}(u)|\geq N^{-1/2}\}\Biggr]
			= \mathcal{O}\Bigl(\frac{N^{-p}}{|\psi(u)|^{4p}}\Bigr),
			\end{align*}
		that concludes the proof.
	\end{itemize}
\end{proof}

Now we can finalize the proof of Theorem \ref{TheoGenCat}.

\begin{proof}[Proof of Theorem \ref{TheoGenCat}]
	Note that $g_1 -g_{1,l}$ is orthogonal to $\hat{g}_{1,l}-g_{1,l}$, since
	\begin{align*}
	&\langle g_1-g_{1,l},\hat{g}_{1,l}-g_{1,l}\rangle = \langle g_1,\hat{g}_{1,l} \rangle - \langle g_1, g_{1,l} \rangle - \langle g_{1,l},\hat{g}_{1,l} \rangle + \langle g_{1,l}, g_{1,l} \rangle \\
	& =\frac{1}{2\pi}\bigl(\langle \F[g_1],\F[\hat{g}_{1,l}]\rangle - \langle \F[g_1], \F[g_{1,l}]\rangle - \langle \F[g_{1,l}],\F[\hat{g}_{1,l}] \rangle + \langle \F[g_{1,l}],\F[g_{1,l}]\rangle\bigr)\\
	&=0
	\end{align*}
	due to isometry property of $\F$ in $L^2(\R)$.  By Pythagorean theorem we get
	\begin{align*}
	\lVert g_1 - \hat{g}_{1,l}\rVert_2^2 = \lVert g_1 - g_{1,l} \rVert_2^2+\lVert g_{1,l}-\hat{g}_{1,l}\rVert_2^2,			
	\end{align*}	
	and the second term can further be determined by
	\begin{align*}
	\lVert \hat{g}_{1,l}-g_{1,l} \rVert_2^2 = \frac{1}{2\pi}\bigl\lVert\F[\hat{g}_{1,l}]-\F[g_{1,l}]\bigr\rVert_2^2 = \frac{1}{2\pi}\int_{-\pi l}^{\pi l}\biggl|\frac{\hat{\theta}(x)}{\tilde{\psi}
		(x)}-\frac{\theta(x)}{\psi(x)}\biggr|^2dx.
	\end{align*}
	Furthermore,
		\begin{align*}
		&\lVert  \hat{g}_{1,l}-g_{1,l} \rVert_\cdot^2 = \frac{1}{2\pi}\int_{-\pi l}^{\pi l} \E \biggl|\frac{\hat{\theta}(x)}{\tilde{\psi}(x)} -\frac{\hat{\theta}(x)}{\psi(x)}+\frac{\hat{\theta}(x)}{\psi(x)}-\frac{\theta(x)}{\psi(x)}\biggr|^2dx\\
		&\leq \frac{1}{\pi}\left[\int_{-\pi l}^{\pi l}\E\biggl|\hat{\theta}(x)\Big(\frac{1}{\tilde{\psi}(x)}- \frac{1}{\psi(x)}\Big)\biggr|^2dx + \int_{-\pi l}^{\pi l}\frac{\E|\hat{\theta}(x)-\theta(x)|^2}{|\psi(x)|^2}dx\right]\\
		&=\frac{1}{\pi}\left[\int_{-\pi l}^{\pi l}\E\biggl[ |\hat{\theta}(x)-\theta(x)+\theta(x)|^2\biggl|\frac{1}{\tilde{\psi}(x)}- \frac{1}{\psi(x)}\biggr|^2\biggl]dx + \int_{-\pi l}^{\pi l}\frac{\E|\hat{\theta}(x)-\theta(x)|^2}{|\psi(x)|^2}dx\right]\\
		&\leq \frac{1}{\pi}\Biggl[2\int_{-\pi l}^{\pi l}\E\biggl[ |\hat{\theta}(x)-\theta(x)|^2\biggl|\frac{1}{\tilde{\psi}(x)}- \frac{1}{\psi(x)}\biggr|^2\biggr]dx + 2\int_{-\pi l}^{\pi l}\E\biggl[ |\theta(x)|^2\biggl|\frac{1}{\tilde{\psi}(x)}- \frac{1}{\psi(x)}\biggr|^2\biggr]dx\\
		&\qquad + \int_{-\pi l}^{\pi l}\frac{\E|\hat{\theta}(x)-\theta(x)|^2}{|\psi(x)|^2}dx\Biggr]\\
		&= \frac{2}{\pi}\Biggl[\underbrace{\int_{-\pi l}^{\pi l}\E\biggl[ |\hat{\theta}(x)-\theta(x)|^2\biggl|\frac{1}{\tilde{\psi}(x)}- \frac{1}{\psi(x)}\biggr|^2\biggr]dx}_{=:\text{(I)}} \\
		&\qquad + \underbrace{\int_{-\pi l}^{\pi l}|\F[g_1](x)\cdot\psi(x)|^2\E\biggl|\frac{1}{\tilde{\psi}(x)}- \frac{1}{\psi(x)}\biggr|^2dx}_{=:\text{(II)}} + \underbrace{\int_{-\pi l}^{\pi l}\frac{\E|\hat{\theta}(x)-\theta(x)|^2}{2|\psi(x)|^2}dx}_{=:\text{(III)}}\Biggr].\\
		\end{align*}
	First we calculate expression (I). Using the Cauchy-Schwarz inequality and applying Lemma \ref{Le1} and Lemma \ref{Le3} it holds
	\begin{align*}
	&\E\biggl[ |\hat{\theta}(x)-\theta(x)|^2\biggl|\frac{1}{\tilde{\psi}(x)}- \frac{1}{\psi(x)}\biggr|^2\biggr] \leq \sqrt{\E|\hat{\theta}(x)-\theta(x)|^4}\sqrt{\E\biggl|\frac{1}{\tilde{\psi}(x)}- \frac{1}{\psi(x)}\biggr|^4}\\
	&\leq \sqrt{\frac{c_1\cdot c_2}{N^2}\cdot \frac{\E|Y_0|^4}{|\psi(x)|^4}},
	\end{align*}
	where $c_1,c_2>0$ are some constants.
	Now we get
	\begin{align*}
	&\int_{-\pi l}^{\pi l}\E\biggl[ |\hat{\theta}(x)-\theta(x)|^2\biggl|\frac{1}{\tilde{\psi}(x)}- \frac{1}{\psi(x)}\biggr|^2\biggr]dx \leq  \frac{\sqrt{c_1\cdot c_2}}{N}\sqrt{\E|Y_0|^4}\int_{-\pi l}^{\pi l}\frac{1}{|\psi(x)|^2}dx.
	\end{align*}
	For the second term (II) we use again Lemma \ref{Le3}. Then it holds
	\begin{align*}
	\E\left|\frac{1}{\tilde{\psi}(u)}-\frac{1}{\psi(u)}\right|^{2} \leq \frac{1}{N}\cdot\frac{c}{|\psi(x)|^4},
	\end{align*}		
	for some $c>0$. Using this, we get
		\begin{align}\label{eq:F_psi}
		&\int_{-\pi l}^{\pi l}|\F[g_1](x)\cdot\psi(x)|^2\,\,\E\biggl|\frac{1}{\tilde{\psi}(x)}- \frac{1}{\psi(x)}\biggr|^2dx \leq \frac{c}{N}\int_{-\pi l}^{\pi l}|\F[g_1](x)|^2\frac{1}{|\psi(x)|^2}dx \\
		& \leq \frac{c}{N}\int_{-\pi l}^{\pi l}\frac{\left(\int_{\R}|g_1(x)|dx\right)^2}{|\psi(x)|^2}dx = \frac{c}{N}\Vert g_1\Vert_1^2\int_{-\pi l}^{\pi l}\frac{1}{|\psi(x)|^2}dx.\nonumber
		\end{align}
	Part (III) can be estimated by Ljapunov inequality and Lemma \ref{Le1} as 
	\begin{align*}
	&\E|\hat{\theta}(x)-\theta(x)|^2 
	 \leq \left( \E|\hat{\theta}(x)-\theta(x)|^4\right)^{1/2}   \leq \frac{\sqrt{C}}{N} \sqrt{\E|Y_0|^4}.
	\end{align*}
	So putting all these results together, it follows for some constant $K>0$ that
	\begin{align*}
	&  \lVert g_1-\hat{g}_{1,l}\rVert_\cdot^2\leq \lVert g_1-g_{1,l} \rVert_2^2 + \frac{1}{\pi}\left[\frac{2\sqrt{c_1\cdot c_2}}{N}\sqrt{\E|Y_0|^4}\int_{-\pi l}^{\pi l}\frac{1}{|\psi(x)|^2}dx \right. \\
	&\qquad \left. + \frac{2c}{N} \Vert g_1\Vert_1^2\int_{-\pi l}^{\pi l}\frac{1}{|\psi(x)|^2}dx + \frac{\sqrt{C}}{N}\sqrt{ \E|Y_0|^4} \int_{-\pi l}^{\pi l}\frac{1}{|\psi(x)|^2}dx\right]\\
	&\leq \lVert g_1-g_{1,l} \rVert_2^2 + \frac{K}{N}\left(\sqrt{\E|Y_0|^4}+ \lVert g_1 \rVert_1^2 \right) \int_{-\pi l}^{\pi l}\frac{1}{|\psi(x)|^2}dx,
	\end{align*}
	that completes the proof.
\end{proof}

\begin{proof}[Proof of Corollary \ref{cor:ErrEst1}]

 Consider expression \eqref{eq:F_psi} in the proof of Theorem \ref{TheoGenCat}. Using assumptions  {\bf (H3)}-- {\bf (H4)}   there, 
	 it holds		
	$$
	\int_{-\pi l}^{\pi l}\frac{|\F[g_1](x)|^2}{|\psi(x)|^2}dx \leq  \frac{1}{c_\psi^2}\int_{\R}|\F[g_1](x)|^2(1+x^2)^{\beta}dx \leq \frac{L}{c_{\psi}^2}.
	$$
\end{proof}

\begin{proof}[Proof of Corollary \ref{cor:ErrEst2}]
Since $\cal F$ is an isometry of $L^2(\R)$ one has 
	\begin{align*}
	&  \|  g-g_{1,l} \|_2^2=    \|  \F[g_1] \id\{ |\cdot|>\pi l  \} \|_2^2  = \int_{|x|>\pi l }|\F[g_1](x)|^2(1+x^2)^{\beta}(1+x^2)^{-\beta} dx \\
	& \leq \max_{|x|>\pi l }{(1+x^2)^{-\beta}}\int_{\R}|\F[g_1](x)|^2(1+x^2)^{\beta}dx \leq \frac{L}{(1+(\pi l )^2)^{\beta}}
	\end{align*}
	by assumption  {\bf (H4)}. Using  {\bf (H3)} one gets 	
	$$
	\int_{-\pi l}^{\pi l}\frac{dx}{|\psi(x)|^2} \leq  c_\psi \int_{-\pi l}^{\pi l}(1+x^2)^{\beta}dx \leq 2c_{\psi} \pi l \big(1+(\pi l)^2\big)^{\beta}.
	$$
	Plugging this into \eqref{EqErrorGenCat} yields the result.
\end{proof}

%-------------------------------------------------------------------------------------------------------------------------------

\section*{Acknowledgement}
W. Karcher and E. Spodarev are grateful to E. V. Jensen for her hospitality during their stay at Aarhus University in February 2011 where this research was initiated. The authors thank M. Rei\ss \ for the fruitful discussions on the subject of the paper. They also acknowledge the valuable help of O. Moreva
in implementing the algorithms of Section \ref{sect:Num}.

%-------------------------------------------------------------------------------------------------------------------------------

\bibliographystyle{unsrt} % Nummern und unsortiert
\bibliography{bibliography}

\end{document}